\DeclareMathAlphabet{\mathcal}{OMS}{cmsy}{m}{n} 
  \begingroup\color{red}\begin{center}\begin{minipage}{12cm} $\blacktriangleright\ $}{%
\newcommand{\sk}{\smallskip}
\newcommand{\mk}{\medskip}
\newcommand{\bk}{\bigskip}
\renewcommand{\emptyset}{\ensuremath{\varnothing}}     
\newcommand{\G}{\mathbf{G}}
\newcommand{\B}{\mathbf{B}}
\newcommand{\U}{\mathbf{U}}
\newcommand{\T}{\mathbf{T}}
\renewcommand{\P}{\mathbf{P}}
\renewcommand{\L}{\mathbf{L}}
\newlength{\leftlength}
\newlength{\rightlength}
\newlength{\calculskip}
\newcommand{\calculvskip}[1]{%
  \ifthenelse{#1 = 0}{\setlength{\calculskip}{0pt}}{}%
  \ifthenelse{#1 = 1}{\setlength{\calculskip}{\smallskipamount}}{}%
  \ifthenelse{#1 = 2}{\setlength{\calculskip}{\medskipamount}}{}%
  \ifthenelse{#1 = 3}{\setlength{\calculskip}{\bigskipamount}}{}%
  \ifthenelse{#1 = 4}{\setlength{\calculskip}{1cm}}{}%
  \vskip\calculskip
}
\newcommand{\leftcentersright}[4][2]{%
        \settowidth{\leftlength}{#2}%
        \settowidth{\rightlength}{#4}%
        \calculvskip{#1}
        \noindent#2\hskip-\leftlength%
        \hfill#3\hfill
        \mbox{}\hskip-\rightlength#4%
        \vskip\calculskip%
        }
\newcommand{\centers}[2][2]{\leftcentersright[#1]{}{#2}{}}
\newcommand{\leftcenters}[3][2]{\leftcentersright[#1]{#2}{#3}{}}
\def\svhline{%
  \noalign{\ifnum0=`}\fi\hrule \@height2\arrayrulewidth \futurelet
   \reserved@a\@xhline}
\def\hlinewd#1{%
\noalign{\ifnum0=`}\fi\hrule \@height #1 %
\futurelet\reserved@a\@xhline}
\numberwithin{equation}{section}
\newtheorem{prop}[equation]{Proposition}  
\newtheorem{thm}[equation]{Theorem}
\newtheorem{lem}[equation]{Lemma}
\theoremstyle{definition}
\newtheorem{rmk}[equation]{Remark}
\newcommand{\X}{\mathrm{X}}
\newcommand{\Z}{\mathrm{Z}}
\newcommand{\Hc}{\mathrm{H}_c}
\newcommand{\Rgc}{\mathrm{R}\Gamma_c}
\newcommand{\ol}{\mathop{\otimes}\limits^\mathrm{L}}
\newcommand{\F}{\mathbb{F}}
\newcommand{\qlb}{\overline{\mathbb{Q}}_\ell}
\begin{document}

\title{Quotient of Deligne-Lusztig varieties}
\author{Olivier Dudas\footnote{Oxford Mathematical Institute.}
\footnote{The author is supported by the EPSRC, Project No EP/H026568/1 and by Magdalen College, Oxford.}}

\maketitle

\begin{abstract} We study the quotient of parabolic Deligne-Lusztig varieties by a finite unipotent group $\U^F$ where $\U$ is the unipotent radical of a rational parabolic subgroup $\P = \L \U$. We show that in some particular cases the cohomology of this quotient can be expressed in terms of  "smaller" parabolic Deligne-Lusztig varieties associated to the Levi subgroup $\L$.
\end{abstract}

\section*{Introduction}

The very first approach to the representation theory of finite reductive groups is the construction of representations via Harish-Chandra (or parabolic) induction. If $\G$ is a connected reductive group over $\F = \overline{\F}_p$ with an  $\F_q$-structure associated to a Frobenius endomorphism $F : \G \longrightarrow \G$, and $\P$ is an $F$-stable parabolic subgroup with an $F$-stable Levi complement $\L$, one can define, over any ring $\Lambda$, the following functors

\leftcenters{and}{$ \begin{array}[b]{l} \hphantom{{}^*}\mathrm{R}_{\L }^\G \, :  \,  \Lambda \L^F \text{-}\mathrm{mod} \longrightarrow  \Lambda \G^F \text{-}\mathrm{mod}  \\ [5pt]
 {}^*\mathrm{R}_{\L}^\G \, : \, \Lambda \G^F \text{-}\mathrm{mod}  \longrightarrow  \Lambda \L^F \text{-}\mathrm{mod}  \\  \end{array}$}

\noindent called Harish-Chandra induction and restriction functors. One of the main feature of these functors is that they satisfy the so-called Mackey formula: if $\mathbf{Q}$ is another $F$-stable parabolic subgroup with $F$-stable Levi complement $\mathbf{M}$ then

\centers{$  {}^*\mathrm{R}_{\mathbf{M}}^\G \circ  \mathrm{R}_{\L}^\G \ \simeq \ \displaystyle \sum  \, \mathrm{R}_{\L \cap{}^x \mathbf{M}}^\L \circ  {}^*\mathrm{R}_{\L \cap {}^x \mathbf{M}}^{{}^x \mathbf{M}} \circ \mathrm{ad}\, x $}

\noindent where $x$ runs over a explicit finite set associated to $\L$ and $\mathbf{M}$. In addition to being a powerful tool for studying an induced representation, this formula is also essential for proving that the Harish-Chandra functors depend on $\L$ only and not on the choice of  $\P$.

\sk

It turns out that not all the representations of $\G^F$ can be obtained by Harish-Chandra induction (already for $\G = \mathrm{SL}_2(\F)$, many representations are \emph{cuspidal}). To resolve this problem Deligne and Lusztig  defined in \cite{DeLu} a generalised induction in the case where $\P$ is no longer $F$-stable but $\L$ still is. They  constructed morphisms between the Grothendieck groups

\leftcenters{and}{$ \begin{array}[b]{l} \hphantom{{}^*}\mathrm{R}_{\L }^\G \, :  \,  K_0(\Lambda \L^F \text{-}\mathrm{mod}) \longrightarrow  K_0(\Lambda \G^F \text{-}\mathrm{mod})  \\ [5pt]
 {}^*\mathrm{R}_{\L}^\G \, : \, K_0(\Lambda \G^F \text{-}\mathrm{mod})  \longrightarrow  K_0(\Lambda \L^F \text{-}\mathrm{mod})  \\  \end{array}$}

\noindent still satisfying the Mackey formula. These morphisms come from a virtual character given by the $\ell$-adic cohomology of a quasi-projective variety $\widetilde \X_{\L,\P}$, the \emph{parabolic Deligne-Lusztig variety} associated to $(\L,\P)$. Here, $\Lambda$ is a finite extension of  $\mathbb{Q}_\ell$, $\mathbb{Z}_\ell$ or $\mathbb{\F}_\ell$. 

\sk

When $\Lambda$ is a finite extension of $\mathbb{Q}_\ell$ the category $\Lambda \G^F$-$\mathrm{mod}$ is semisimple, and its Grothendieck group encodes most of the information. However, in the modular framework, that is when $\Lambda = \mathbb{Z}_\ell$ or $\F_\ell$, the Deligne-Lusztig induction and restriction morphisms give only partial information on the category of modules. To obtain homological properties, one needs to consider the complex $\Rgc(\X,\Lambda)$ representing the cohomology of the variety in the derived category $D^b(\Lambda \G^F$-$\mathrm{mod})$. Using this point of view, Bonnafé and Rouquier  defined in \cite{BR1} triangulated functors

\leftcenters{and}{$ \begin{array}[b]{l} \hphantom{{}^*}\mathcal{R}_{\L \subset \P }^\G \, :  \,  D^b(\Lambda \L^F \text{-}\mathrm{mod}) \longrightarrow D^b(\Lambda \G^F \text{-}\mathrm{mod})  \\ [5pt]
 {}^*\mathcal{R}_{\L \subset \P }^\G \, : \, D^b(\Lambda \G^F \text{-}\mathrm{mod})  \longrightarrow D^b(\Lambda \L^F \text{-}\mathrm{mod}).  \\  \end{array}$}

\noindent Unlike the previous functors, these are not expected to satisfy a naive Mackey formula as they highly depend on the choice of $\P$. However, there is a good evidence that the composition ${}^*\mathcal{R}_{\mathbf{M}\subset \mathbf{Q}}^\G \circ  \mathcal{R}_{\L \subset \P}^\G$ should be somehow related to functors associated to smaller Levi subgroups. The purpose of this paper is to investigate the case where $\mathbf{Q}$ is $F$-stable. If $\U$ denotes its unipotent radical, then the composition ${}^*\mathcal{R}_{\mathbf{M}\subset \mathbf{Q}}^\G \circ  \mathcal{R}_{\L \subset \P}^\G$ is induced by the cohomology of the quotient variety $\U^F \backslash \widetilde \X_{\L,\P}$.

\sk

In the original paper of Deligne and Lusztig \cite{DeLu}, the Levi subgroup $\L$ is a torus and $ \widetilde \X_{\L,\P}$ corresponds to some element $w$ of the Weyl group $W$ of $\G$. The motivating example is when $(\L,\P)$ represents a Coxeter torus, that is when $w$ is a Coxeter element of $W$. In that case,  the variety  $\X_{\L,\P} = \widetilde \X_{\L,\P}/\L^F$ is contained in the maximal Schubert cell and its quotient by $\U^F$ has been computed by Lusztig in \cite{Lu}. In the case where $\Lambda = \qlb$ it is given by the following quasi-isomorphism of $\mathbf{M}^F$-modules:

\centers{$\Rgc(\U^F \backslash \X_{\L,\P},\qlb) \, \simeq \,  \Rgc(\X_{\L\cap \mathbf{M}, \P \cap \mathbf{M}}, \qlb)  \otimes \Rgc((\F^\times)^d,\qlb)$}

\noindent where $d$ is the semisimple $\F_q$-index of $\mathbf{M}$ in $\G$. Surprisingly, this isomorphism does not come from a $\mathbf{M}^F$-equivariant isomorphism of varieties, and we will see that it is more natural to study the quotient of $\widetilde \X_{\L,\P}$ instead of $\X_{\L,\P}$.

\sk

In general, the variety $\X_{\L,\P}$ is not contained in only one Schubert cell. The strategy towards the determination of the cohomology of $\U^F \backslash  \widetilde \X_{\L,\P}$ will consist in the following steps:

\begin{itemize}

\item decompose the variety $\widetilde \X_{\L,\P}$ into \emph{pieces} $\widetilde \X_x$ coming from the decomposition of $\G/\P$ into $\mathbf{Q}$-orbits (see Section \ref{sec2});

\item in some well-identified cases, express the cohomology of $\U^F \backslash \X_x$ in terms of parabolic Deligne-Lusztig varieties associated to Levi subgroups of $\mathbf{M}$ (see Section \ref{sec3}). 

\end{itemize}

\noindent The second step is undoubtedly the most difficult. We are able to provide a satisfactory solution to this problem in presumably very specific situations, namely when the pair $(\L \cap {}^x \mathbf{M},\P \cap {}^x \mathbf{M})$ is close to $(\L,\P)$ (see Theorem \ref{mainthm} for more details).  However, it turns out that our main result is general enough to cover most of the Deligne-Lusztig varieties associated to unipotent $\Phi_d$-blocks with cyclic defect group. This should give many new results on the geometric version of Broué's abelian defect conjecture. To illustrate this phenomenon, we compute in Section \ref{sec33} the principal part of the cohomology of the parabolic variety associated to the principal $\Phi_{2n-2}$-block  for a group of type $B_n$ as well as its Alvis-Curtis dual. In subsequence papers this
baby example will be supplemented by the following more involved results:

\begin{itemize}

\item for exceptional groups, the determination of the cohomology of varieties associated to principal $\Phi_d$-blocks when $d$ is the largest regular number besides the Coxeter number. This should be enriched with predictions for the corresponding Brauer trees;

\item for groups of type $A_n$, the determination of the cohomology of varieties associated to any unipotent block from the knowledge of the cohomology of the variety $\X(\mathbf{w}_0^2)$. 

\end{itemize}


\sk


\section{Parabolic Deligne-Lusztig varieties\label{sec1}}

Let $\G$ be a connected reductive algebraic group, together with an isogeny $F$, some power of which is a Frobenius endomorphism. In other words, there exists a positive integer $\delta$ such that $F^\delta$ defines a split $\mathbb{F}_{q^\delta}$-structure on $\G$ for a certain power $q^\delta$ of the characteristic $p$ (note that $q$ might not be an integer). For all  $F$-stable algebraic subgroup $\mathbf{H}$ of $\G$, we will denote by $H$ the finite group of fixed points $\mathbf{H}^F$.  \sk

We fix a Borel subgroup $\B$ containing a maximal torus $\T$ of $\G$ such that both $\B$ and $\T$ are $F$-stable. They define a root sytem $\Phi$ with basis $\Delta$, and a set of positive (resp. negative) roots $\Phi^+$ (resp. $\Phi^-$). Note that the corresponding Weyl group $W$ is endowed with an action of $F$, compatible with the isomorphism $W \simeq N_\G(\T)/\T$.  The set of simple reflections will be denoted by $S$. We shall also consider representatives $\{\dot w \, | \, w \in W\}$ of $W$ in $N_\G(\T)$ compatible with the action of $F$ (this is possible by \cite[Proposition 8.21]{DM3}).

\sk

To any subset $I \subset S$ one can associate a standard parabolic subgroup $\P_I$ containing $\B$ and a standard Levi subgroup $\L_I$ containing $\T$. If $\U_I$ denotes the unipotent radical of $\P_I$, the parabolic subgroup can be written as $\P_I = \L_I \U_I$. Let $\U$ (resp. $\U^-$) be the unipotent radical of $\B$ (resp. the opposite Borel subgroup $\B^-$). Each root $\alpha$ defines a one-parameter subgroup $\U_\alpha$, and we will denote by $u_\alpha : \mathbb{F} \longrightarrow \U_\alpha$ an isomorphism of algebraic group. In order to simplify the calculations, we shall choose these isomorphisms so that $u_\alpha(\lambda) \, \dot s_\alpha = u_{-\alpha}(\lambda^{-1}) \alpha^\vee(\lambda) u_\alpha(-\lambda^{-1})$.  Note that the groups $\U_\alpha$ might not be $F$-stable in general even though the groups $\U$ and $\U^-$ are. 

\sk

Finally, we denote by $B_W^+$ (resp $B_W$) the Artin-Tits monoid (resp. Artin-Tits group) of $W$, and by $\mathbf{S} = \{\mathbf{s}_\alpha \, | \, \alpha \in \Delta\}$ its generating set. The reduced elements of $B_W^+$ form a set $\mathbf{W}$ which is in bijection with $W$ via the canonical projection $B_W \twoheadrightarrow W$. We shall also consider the semi-direct product $B_W \rtimes \langle F \rangle$ where $F \cdot \mathbf{b} = {}^F \mathbf{b} \cdot F$.

\sk

Let $\mathbf{I}$ be a subset of $\mathbf{S}$ and denote by $B_\mathbf{I}^+$ the submonoid of $B_W^+$ generated by $\mathbf{I}$. Following \cite{DM3}, we will denote by $\mathbf{I} \mathop{\longrightarrow}\limits^\mathbf{b} {}^F \mathbf{I}$ any pair $(\mathbf{I},\mathbf{b})$ with  $\mathbf{b} \in B_W^+$ satisfying the following properties:

\begin{itemize}

\item any left divisor of $\mathbf{b}$  in $B_\mathbf{I}^+$ is trivial;

\item ${}^{\mathbf{b} F} \mathbf{I} = \mathbf{I}$, that is every $\mathbf{s} \in \mathbf{I}$ satisfies $\mathbf{b}^{-1} \mathbf{s} \mathbf{b} \in {}^F \mathbf{I}$.

\end{itemize} 

\noindent Digne and Michel have constructed in \cite{DM3} a \emph{parabolic Deligne-Lusztig variety} $\X(\mathbf{I},\mathbf{b} F)$ associated to any such pair. Note that when $\mathbf{b} = \mathbf{w} \in \mathbf{W}$ and if $w$ denotes its image by the canonical projection $B_W \twoheadrightarrow W$, the previous conditions are equivalent to $w$ being $I$-reduced and ${}^{wF} I =I$. In that case, the variety $\X(\mathbf{I},\mathbf{w} F)$ can be written

\centers{$ \X(I,wF) = \big\{g \in \G\, \big| \, g^{-1} \, {}^F g \in \P_I w \, {}^F \P_I\big\} \big/\P_I .$}

\noindent As in the case of tori, we can construct a Galois covering of $\X(I,wF)$. It is well-defined up to a choice of a representative $n$ of $w$ in $N_\G(\T)$:

\centers{$ \widetilde \X(I,nF) = \big\{g \in \G \, \big| \, g^{-1} \, {}^F g \in \U_I n \, {}^F \U_I\big\}\big/\U_I$.}

\noindent  The natural projection $\G/\U_I \longrightarrow \G/\P_I$ makes $\widetilde \X(I,nF)$ a $\L_I^{n F}$-torsor over $\X(I,wF)$. By using an $F$-stable Tits homomorphism $t: B_W \longrightarrow N_\G(\T)$ extending $w \in W \longmapsto \dot w$, Digne and Michel have generalised in \cite{DM3} this construction to any element $\mathbf{I} \mathop{\longrightarrow}\limits^\mathbf{b} {}^F \mathbf{I}$. The corresponding variety will be denoted by $\widetilde \X(\mathbf{I}, {\mathbf{b}} F)$. It is a $\L_I^{t(\mathbf{b}) F}$-torsor over $\X(\mathbf{I},\mathbf{b}F)$. When $\mathbf{b} = \mathbf{w} \in \mathbf{W}$ we shall simply denote $t(\mathbf{w})$ by $\dot{{w}}$.

\begin{rmk} When $\mathbf{I}$ is empty, we obtain the usual Deligne-Lusztig varieties $\X(\mathbf{b}F)$ and $\widetilde \X({\mathbf{b}} F)$ associated to any element $\mathbf{b}$ of the Braid monoid (as defined in \cite{BMi2} or \cite{BR1}).
\end{rmk}

\section{\label{sec2}Decomposing the quotient of \texorpdfstring{$\X(I,wF)$}{X(I,wF)}}

Let $(I,w)$ be a pair consisting of an element $w$ of $W$ and a subset $I$ of $S$ such that $w$ is $I$-reduced and ${}^{wF} I = I$. Let $J$ be another subset of $S$. If $J$ is $F$-stable, then so is the corresponding standard parabolic subgroup $\P_J$ and its unipotent radical $\U_J$. In this section we are interested in describing  the quotient of the parabolic Deligne-Lusztig variety

\centers{$ \X(I,wF) \, = \, \big\{g \in \G\, | \, g^{-1} \, {}^F g \in \P_I w \, {}^F \P_I \big\}\big/\mathbf{P}_I $}

\noindent by the finite unipotent group $U_J$. Our main goal is to express this quotient   (or at least its cohomology) in terms of "smaller parabolic varieties" associated to the Levi subgroup $\L_J$.

\sk

Throughout this paper, $\Lambda$ will be any extension of the ring $\mathbb{Z}_\ell$ of $\ell$-adic integers. We shall always assume that $\ell$ is different from $p$, so that by cohomology over $\Lambda$ we mean the extension of the étale cohomology of quasi-projective varieties with coefficients in $\mathbb{Z}_\ell$. The properties of  $\Rgc(-,\Lambda)$ that we will use are either classical or can be found in \cite{Rou}.

\subsection{A general method\label{sec21}}

Recall that the partial flag variety $\G/\mathbf{P}_I$ admits a decomposition into $\P_J$-orbits $\G/\P_I = \coprod \P_J x \P_I$ where $x$ runs over any set of representatives of $W_J \backslash W / W_I$. The restriction of this decomposition to $\X(I,wF)$ can be written as 
\begin{equation}
  \X(I,wF) \, =  \, \displaystyle \coprod_{x \in [W_J \backslash W / W_I]} \big\{px\mathbf{P}_I \in \P_J x \P_I/\mathbf{P}_I \, \big| \, p^{-1} \, {}^F p  \in x(\P_I w \, {}^F \P_I )\, {}^F x ^{-1}\big\}.
\end{equation}
\noindent We will denote by $\X_x = \X(I,wF) \cap \P_J x \P_I$ a piece of this decomposition. It is a locally closed $P_J$-subvariety of $\X(I,wF)$. Now, each of these pieces can be lifted up to $\P_J$. More precisely, if we define the variety

\centers{$ \Z_x \, = \, \big\{p \in \P_J \, \big| \, p^{-1} \, {}^F p  \in x(\P_I w \, {}^F \P_I ) \, {}^F x^{-1}\big\} $}

\noindent then the canonical projection $\G \longrightarrow \G/\P_I$ induces a fibration $\Z_x \longrightarrow \X_x$ with fiber isomorphic to $\P_J \cap {}^x \P_I$. Now if we assume that $x$ is $J$-reduced-$I$, the intersection $\P_J \cap {}^x \P_I$ can be decomposed as $\P_J \cap {}^x \P_I = (\L_J \cap {}^x \P_I) \cdot (\U_J \cap {}^x \U)$. Furthermore, $\L_J \cap {}^x \P_I$ is a standard parabolic subgroup of $\L_J$ (it contains $\L_J \cap \B$) and hence it can be written $\L_J \cap \P_{K_x}$ with $K_x = J \cap {}^x \Phi_I$. The cohomology of $\X_x$ is thus given by
\begin{equation}\label{xxeq}
 \Rgc(\X_x,\Lambda) \simeq \Rgc(\Z_x / \L_J \cap \P_{K_x},\Lambda) [2 \dim \U_J \cap {}^x \U].
 \end{equation}
The advantage of this description is that the quotient of the variety $\Z_x$ by $U_J$ is easier to compute. If we decompose $p \in \P_J$ as $p= ul \in \U_J \L_J$ then the quotient variety can be written (see for example \cite[Proposition 1.3]{Du1})

\centers{$ U_J \backslash \Z_x \, = \, \big\{(\bar p,l) \in \big[ (x\P_I w \, {}^F \P_I  \, {}^F x ^{-1}) \cap \P_J\big] \times \L_J \, \big| \, \pi_J(\bar p) = l^{-1} \, {}^F l \big\}$}

\noindent where $\pi_J : \P_J \longrightarrow \L_J $ is the canonical projection. 

\sk

Our aim is to relate this variety to "smaller" parabolic Deligne-Lusztig varieties. For that purpose, we need to identify the double cosets in which $l^{-1} {}^F l$ lies, which amounts to decomposing the intersection $(x\P_I w \, {}^F \P_I  \, {}^F x^{-1}) \cap \P_J$ as well as its image under $\pi_J$. Let $v \in W_J$ be a $K_x$-reduced-${}^F K_x$ element. We can decompose the double coset $\P_{K_x} v \, {}^F \P_{K_x}$ as follows:

\centers{$ \P_{K_x} v \, {}^F \P_{K_x}  = (\L_J \cap \P_{K_x})\, \U_J v \, ( \L_J \cap \, {}^F \P_{K_x}).$}

\noindent Since $\L_J\cap \P_{K_x} = \L_J \cap {}^x \P_I$ is a subgroup of  ${}^x \P_I$, the intersection $(x\P_I w \, {}^F \P_I  \, {}^F x ^{-1}) \cap (\P_{K_x} v \, {}^F \P_{K_x})$ is non-empty if and only if $(x\P_I w \, {}^F \P_I  {}^F x^{-1} v^{-1}) \cap \U_J$ is. In this case, the projection $\pi_J : \P_J \longrightarrow \L_J$ induces a fibration $(x\P_I w \, {}^F \P_I  \, {}^F x ^{-1}) \cap (\P_{K_x} v \, {}^F \P_{K_x}) \longrightarrow  (\L_J \cap \P_{K_x}) \, v\, ( \L_J \cap \, {}^F \P_{K_x})$ with fiber isomorphic to $(x\P_I w \, {}^F \P_I  {}^F x^{-1} v^{-1}) \cap \U_J$. If we define $\Z_{x}^v$ to be the variety

\centers{$\Z_{x}^v \, = \, \big\{(\bar p,l) \in \big[ (x\P_I w \, {}^F \P_I \, {}^F x^{-1}) \cap (\P_{K_x} v \, {}^F \P_{K_x}) \big] \times \L_J \, \big| \, \pi_J(\bar p) = l^{-1} \, {}^F l \big\}$}

\noindent then we obtain a decomposition of $U_J \backslash \Z_x$ into locally closed subvarieties together with $L_J$-equivariant maps
\begin{equation}
 \Z_{x}^v  \longrightarrow \big\{ l \in \L_J  \, | \, l^{-1} \, {}^F l \in \L_J \cap \P_{K_x} v \, {}^F (\L_J \cap \P_{K_x}) \big\} \label{surjmor}
\end{equation}
\noindent with fibers isomorphic to $(x\P_I w \, {}^F \P_I  \, {}^F x^{-1} v^{-1}) \cap \U_J$. 

\begin{rmk} In the case where ${}^{vF} K_x = K_x$, the quotient by $\L_J \cap \P_{K_x}$ of the variety on the right-hand side of \ref{surjmor} can be identified with the parabolic Deligne-Lusztig variety associated to $K_x \mathop{\longrightarrow}\limits^v {}^F K_x$. We shall, by convenient abuse of notation, denote it by $\X_{\L_J}(K_x,vF)$ even when $vF$ does not normalise $K_x$.
\end{rmk}

Finally, we set $\mathfrak{Z}_x^v = \Z_x^ v/\L_J\cap \P_{K_x}$. The right action of $\U_J \cap {}^x \U$ on $\Z_x$  induces an action by $F$-conjugation on $\mathfrak{Z}_x^v$ and let  $\mathfrak{X}_x^v = \mathfrak{Z}_x^v / \U_J \cap {}^x \U_I$ be the quotient (equivalently, it is the image of $\Z_x^v$ by the morphism $U_J \backslash \Z_x \twoheadrightarrow U_J \backslash \X_x$). At this point we have obtained

\begin{itemize}

\item A decomposition of $U_J \backslash \X(I,wF)$ into some locally closed $L_J$-varieties $\mathfrak{X}_x^v$.

\item A quasi-isomorphism $\Rgc(\mathfrak{X}_x^v,\Lambda) \simeq \Rgc(\mathfrak{Z}_x^v,\Lambda) [2 \dim \U_J \cap {}^x \U]$ (obtained as in \ref{xxeq}).

\item A $L_I$-equivariant morphism $\mathfrak{Z}_x^v \longrightarrow \X_{L_J}(K_x,vF)$ with fiber isomorphic to $(x\P_I w \, {}^F \P_I  \, {}^F x^{-1} v^{-1}) \cap \U_J$.

\end{itemize}

\noindent Therefore, if we want to express the cohomology of $U_J \backslash \X(I,wF)$ in terms of the different varieties $\X_{L_J}(K_x,vF) $ that can appear we need to refine the description of the latter morphism. This will be done in Section \ref{sec23}  after discussing the case of parabolic varieties associated to elements of the Braid monoid. 

\begin{rmk} When ${}^{vF} K_x = K_x$,  we can  actually be more precise: $l^{-1} \, {}^F l $ can be written uniquely as $l_1 \dot v \, {}^F l_2$ with $ l_1 \in (\L_J \cap  \U_{K_x}) \cap {}^{vF} (\L_J \cap  \U_{K_x}^-) $ and $l_2 \in \L_J \cap \P_{K_x}$. Then for $z \in (x\P_I w \, {}^F \P_I  \, {}^F x^{-1} v^{-1}) \cap \U_J$  we have $( l_1 z \dot v {}^F l_2,l) \in \Z_x^v$ and  all the elements are obtained that way. In other words, we have the following isomorphism of varieties

\centers{$ \Z_x^v \, \simeq \, \big[(x\P_I w \, {}^F \P_I  \, {}^F x^{-1} v^{-1}) \cap \U_J \big] \times \big\{ l \in \L_J  \, | \, l^{-1} \, {}^F l \in \L_J \cap \P_{K_x} v \, {}^F (\L_J \cap \P_{K_x}) \big\}. $}

\noindent Through this isomorphism the group $L_J$ (resp. $\L_J \cap \P_{K_x}$) acts on $l \in \L_J$ by left (resp. right) multiplication. However, it is more difficult to describe the action of $\L_J \cap \P_{K_x}$ on $(x\P_I w \, {}^F \P_I  \, {}^F x^{-1} v^{-1}) \cap \U_J$. In particular, $\mathfrak{Z}_x^v$ is in general not isomorphic to $\big[(x\P_I w \, {}^F \P_I  \, {}^F x^{-1} v^{-1}) \cap \U_J\big] \times \X_{\L_J}(K_x,vF)$. We shall  nevertheless  give many examples where the cohomology of these two varieties coincide.

\end{rmk}
\subsection{Elements of the Braid monoid}

By \cite[Section 6]{DM3} any element $\mathbf{I} \mathop{\longrightarrow}\limits^\mathbf{b} {}^F \mathbf{I}$ can be decomposed as $\mathbf{I} = \mathbf{I}_1 \mathop{\longrightarrow}\limits^{\mathbf{w}_1}$ $ \mathbf{I}_2  \mathop{\longrightarrow}\limits^{\mathbf{w}_2} \cdots$  $\mathop{\longrightarrow}\limits^{\mathbf{w}_r} \mathbf{I}_{r+1}= {}^F \mathbf{I}$ where $\mathbf{w}_i \in \mathbf{W}$. Using this property one can easily generalize the previous constructions to $\X( \mathbf{I},\mathbf{b}F)$: to each tuple $\mathbf{x} = (x_1, \ldots, x_r)$ with $x_i$ a $J$-reduced-${I}_i$ element of $W$ one can associate varieties $\X_\mathbf{x}$ and $\Z_\mathbf{x}$ such that

\centers{$\Z_\mathbf{x} \, = \, \left\{ (p_1,\ldots,p_r) \in (\P_{J})^r\, \left| \, \begin{array}{l} p_i^{-1} p_{i+1} \in x_i \P_{I_i} w_i \, \P_{I_{i+1}} x_{i+1}^{-1} \\[4pt]   p_r^{-1}\, {}^F p_1 \in x_r \P_{I_r} w_r \, {}^F \P_{I_{1}} \, {}^F x_{1}^{-1} \end{array}\right.\right\}$ }

\leftcenters{and}{$ \Rgc(\X_\mathbf{x},\Lambda) \,\simeq \, \Rgc\big(\Z_\mathbf{x} \big/ \prod \L_J \cap \P_{K_{x_i}},\Lambda\big) \big[2 \sum \dim \U_J \cap {}^{x_i} \U\big]$}

\noindent with $K_{x_i} = J \cap {}^{x_i} \Phi_{I_i}$.

\sk

 By looking at the intersections of $x_i \P_{I_i} w_i \P_{I_{i+1}} x_{i+1}^{-1}$ with double cosets of the form  $\P_{K_{x_i}} v_i \P_{K_{x_{i+1}}}$ one can decompose $U_J \backslash \Z_{\mathbf{x}}$ into locally closed subvarieties $\Z_\mathbf{x}^{\mathbf{v}}$ together with $L_J$-equivariant maps 
\begin{equation}\label{eqquotient} \Z_{\mathbf{x}}^\mathbf{v} \, \longrightarrow \,
 \left\{ (l_1, \ldots,l_r)  \in (\L_J)^r  \, \left|  \, \begin{array}{r} l_i^{-1} \, l_{i+1} \in \big(\L_J \cap \P_{K_{x_i}}\big)\, v_i \, \big(\L_J \cap \P_{K_{x_{i+1}}}\big) \\[5pt] 
l_r^{-1} \, {}^F l_{1} \in \big(\L_J \cap \P_{K_{x_r}}\big) \, v_r\, {}^F \big(\L_J \cap \P_{K_{x_{1}}}\big) 
\end{array} \right.\right\} 
\end{equation}
\noindent with fibers isomorphic to

 \centers{$ \U_J \cap \big(x_r\P_{I_r} w_r \, {}^F \P_{I_1}  \, {}^F x_1^{-1} v_r^{-1}\big) \times \displaystyle \, \prod_{i=1}^{r-1} \, \U_J \cap \big(x_i\P_{I_i} w_i  \P_{I_{i+1}}  x_{i+1}^{-1} v_i^{-1}\big). $}

 \noindent In the case where ${}^{v_i} K_{x_{i+1}} = K_{x_i}$ and ${}^{v_{r} F}K_{x_1} = K_{x_r}$, the quotient by $\prod \L_J \cap \P_{K_{x_i}}$ of  the variety on the right-hand side of \ref{eqquotient} can be identified with the parabolic Deligne-Lusztig variety $\X_{\L_J}(\mathbf{K}_{x_1},\mathbf{v}_1 \cdots \mathbf{v}_r F)$. 

\subsection{A further decomposition\label{sec23}}

We now study the intersection $(x\P_I w \, {}^F \P_I  \, {}^F x^{-1} v^{-1}) \cap \U_J$ in order to obtain information on the morphism $\mathfrak{Z}_x^v \longrightarrow \X_{L_J}(K_x,vF)$ defined at the end of Section \ref{sec21}. This will be achieved using the Curtis-Deodhar decomposition.

\sk

Let $x,w,w'$ be elements of $W$, and fix a reduced expression $w=s_1 \cdots s_r$ of $w$. Recall that a \emph{subexpression} of $w$ (with respect to the decomposition $w=s_1 \cdots s_r$) is an element of $\Gamma = \{1,s_1\} \times \cdots \times \{1,s_r\}$. Such a subexpression $\gamma = (\gamma_1,\ldots,\gamma_r)$ is said to be \emph{$x$-distinguished} if  $\gamma_i = s_i$ whenever $x \gamma_1 \cdots \gamma_{i-1} s_i > x \gamma_1 \cdots \gamma_{i-1}$. The main result in \cite{Deo} and \cite{Cur} gives a  decomposition of the double  Schubert cell $\B w \B \cap (\B)^x\, w' \B \, \subset \G/\B$ in terms of certain $x$-distinguished subexpressions of $w$, as well as an explicit parametrisation of each piece (see \cite[Section 2.2]{Du2} for more details).

\begin{thm}[Deodhar, Curtis]\label{deodec} Let $w,w',x$ be elements of the Weyl group and $w=s_1 \cdots s_r$ be a reduced expression of $w$. There exists a decomposition of $\B w \B \cap (\B)^x\, w' \B$ into locally closed subvarieties

\centers{$\B w \B \cap (\B)^x\, w' \B \, = \, \displaystyle \coprod_{\gamma \in \Gamma_{w'}} \Omega_\gamma w' \B$} 

\noindent where $\gamma$ runs over the set $\Gamma_{w'}$ of  subexpression of $w$ whose product is $w'$. Furthermore, the decomposition has the following properties:

\begin{itemize}

\item[$\mathrm{(i)}$] Each cell $\Omega_\gamma w' \B$ is stable by multiplication by $\U\cap\U^x$;

\item[$\mathrm{(ii)}$] $\Omega_\gamma \subset \U^x$ and the restriction of the map $\B^x \longrightarrow (\B)^x w' \B/\B$ to $\Omega_\gamma$ is injective;

\item[$\mathrm{(iii)}$] $\Omega_\gamma$ is non-empty if and only if $\gamma$ is $x$-distinguished;

\item[$\mathrm{(iv)}$]  If $\Omega_\gamma$ is non-empty, then it is isomorphic to $\mathbb{A}_{n_\gamma} \times (\mathbb{G}_m)^{m_\gamma}$ where 

\centers{$n_\gamma = \#\{i=1,\ldots,r \, | \, x \gamma_1 \cdots \gamma_{i-1} s_i > x \gamma_1 \cdots \gamma_{i-1}\}$}

\leftcenters{and}{$m_\gamma = \#\{i = 1,\ldots,r \, | \, \gamma_i = 1\}. $}

\end{itemize}

\end{thm}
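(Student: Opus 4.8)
The plan is to reduce the statement to the classical Deodhar decomposition of a single Bruhat cell and then track how the condition ``$w' $ is the product of the subexpression'' organises the pieces. First I would fix the reduced expression $w = s_1 \cdots s_r$ and build the decomposition inductively on $r$, following Deodhar's original argument: write $w = w_1 s_r$ with $w_1 = s_1 \cdots s_{r-1}$, and use the standard fact that $\B w \B = \B w_1 \B \dot s_r \B \sqcup \B w_1 \B$ at the level of the last factor, combined with the multiplication rules for $\U_{\alpha_r}$ against $\dot s_r$ coming from the chosen parametrisation $u_\alpha(\lambda)\dot s_\alpha = u_{-\alpha}(\lambda^{-1})\alpha^\vee(\lambda) u_\alpha(-\lambda^{-1})$. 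Intersecting with $(\B)^x w' \B$ then splits according to whether the last letter $\gamma_r$ of the subexpression is $1$ or $s_r$, and whether $x\gamma_1\cdots\gamma_{r-1} s_r$ is a length increase or decrease relative to $x\gamma_1\cdots\gamma_{r-1}$. This is exactly the bookkeeping that produces the sets $\Gamma_{w'}$, the ``distinguished'' condition, and the two exponents $n_\gamma$ (length-increasing steps, each contributing an affine line $\mathbb{A}^1$) and $m_\gamma$ (steps where $\gamma_i = 1$ but we are allowed to scale, contributing a $\mathbb{G}_m$).

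Next I would record the four asserted properties as consequences of the inductive construction. For $\mathrm{(i)}$, stability under $\U \cap \U^x$: the cells $\Omega_\gamma$ live inside $\U^x$ by construction, and multiplying on the left by an element of $\U\cap\U^x$ preserves both the ambient Bruhat cell $\B w\B$ and the ``${}^x\B w'\B$'' class, so the decomposition is equivariant; this also explains why the natural coordinates one writes down only involve the root subgroups \emph{not} lying in $\U\cap\U^x$. For $\mathrm{(ii)}$, the containment $\Omega_\gamma \subset \U^x$ is immediate, and injectivity of $\Omega_\gamma \hookrightarrow (\B)^x w'\B/\B$ follows because two elements of $\U^x$ that become equal modulo ${}^x\B$ (after the $w'$-twist) must already be equal — one uses the unique factorisation of ${}^x\B$ as a product of root subgroups and the fact that $\Omega_\gamma$ is cut out inside the ``positive'' part transverse to $\U^x\cap {}^{xw'}\U$. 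For $\mathrm{(iii)}$, emptiness: if at some step $i$ we have $x\gamma_1\cdots\gamma_{i-1}s_i > x\gamma_1\cdots\gamma_{i-1}$ but $\gamma_i = 1$, then the inductive step forces an empty intersection (the relevant $\U_{\alpha_i}$-orbit does not meet the required double coset), which is precisely the failure of the $x$-distinguished condition; conversely when $\gamma$ is $x$-distinguished every step contributes either an $\mathbb{A}^1$ (length increase, $\gamma_i = s_i$ forced) or a $\mathbb{G}_m$ or a point, and nothing is killed. Property $\mathrm{(iv)}$ is then just reading off the product of these factors: a length-increasing step gives $\mathbb{A}^1$, a step with $\gamma_i = 1$ (necessarily a length decrease or fixed, hence not forced) gives $\mathbb{G}_m$, and a length-decreasing step with $\gamma_i = s_i$ gives a point; so $\Omega_\gamma \cong \mathbb{A}^{n_\gamma}\times(\mathbb{G}_m)^{m_\gamma}$ with $n_\gamma$, $m_\gamma$ as stated, and one checks $n_\gamma + m_\gamma + \#\{\text{length-decreasing },\gamma_i=s_i\} = r$.

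The main obstacle, as usual with Deodhar-type statements, is the inductive step itself: one must carefully analyse the product $\U_{\alpha_i}\dot s_i$ against the constraint imposed by $(\B)^x w'\B$, distinguishing the three cases (forced affine coordinate, free $\mathbb{G}_m$ coordinate, collapsed point) and verifying that the coordinates glue into a genuine locally closed decomposition rather than merely a set-theoretic partition. Keeping the $\U\cap\U^x$-equivariance visible throughout — so that the final cells are manifestly stable under multiplication by $\U\cap\U^x$ — requires choosing the transversal coordinates compatibly at each stage. Rather than reproving all of this, I would cite \cite{Deo} and \cite{Cur} for the existence of the decomposition and properties $\mathrm{(iii)}$–$\mathrm{(iv)}$, and refer to \cite[Section 2.2]{Du2} for the explicit parametrisation and for properties $\mathrm{(i)}$–$\mathrm{(ii)}$, which is the form in which we shall use it in Section \ref{sec23}; the only thing worth spelling out here is the translation of Deodhar's conventions (which are stated for $\B w\B \cap {}^{\dot x}\B w'\B$) into the normalisation of the $u_\alpha$ fixed in Section \ref{sec1}.
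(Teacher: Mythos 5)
This theorem is quoted in the paper without proof, as a borrowed result of Deodhar and Curtis whose explicit parametrisation is referred to \cite[Section 2.2]{Du2}, and your proposal ultimately does the same thing: it defers to \cite{Deo}, \cite{Cur} and \cite{Du2} for the substance. The inductive sketch you give beforehand (length-increasing steps forced and contributing $\mathbb{A}^1$, omitted letters contributing $\mathbb{G}_m$, the remaining length-decreasing steps collapsing to points, with equivariance under $\U\cap\U^x$ built into the choice of transversal coordinates) is the standard argument and is consistent with properties $\mathrm{(i)}$--$\mathrm{(iv)}$, so your approach is essentially the same as the paper's.
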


\begin{rmk} For convenience, we will always denote by $\mathbb{G}_m$ the spectrum of the ring $\F[t,t^{-1}]$ although 
we will not necessarily use its group structure.
\end{rmk}

 In order to use this result, we first write the fiber of \ref{surjmor} as

\centers{$ (x\P_I w \, {}^F \P_I  \, {}^F x^{-1} v^{-1}) \cap \U_J \, = \,( x\B W_I w \B \, {}^F x^{-1} v^{-1}) \cap \U_J$.}

\noindent Let $y \in W_I$, and let $\gamma$ be a $x$-distinguished subexpression of $yw$ whose product is $w'=x^{-1}  v \, {}^F v$. Then the map $(z,z') \in \Omega_\gamma \times \U^x \cap {}^{w'} \U \longmapsto zz'w' \in \B w \B \cap (\U)^x w'$ is well-defined and it is injective by Theorem \ref{deodec}.(ii).  By taking the union over such subexpressions, we obtain the following decomposition

\centers{$ \U \cap x\B yw \B  (v\, {}^F x )^{-1} \, = \, \displaystyle \hskip-3mm \bigsqcup_{\gamma \in \Gamma_{x^{-1}v\, {}^Fx}} \hskip-3mm \big({}^x \Omega_\gamma \big)\cdot \big(\U \cap {}^{v\, {}^Fx} \U \big).$}

\noindent Note that we do not need to fix a reduced expression of $y$: indeed, since $x$ is reduced-$I$, the subexpression $\gamma$ will start with any reduced expression of $y$. 

\sk

Furthermore, by Theorem \ref{deodec}.(i), each coset $\Omega_\gamma x^{-1}v\, {}^F x \, \B$ is stable by left-multiplication by $\U \cap \U^x$, and therefore all the varieties occurring in the previous decomposition are stable by the left action of ${}^x \U \cap \U$. Since $x$ is $J$-reduced, they are  in particular  stable by the action of $\L_J\cap \U$. Taking the image by the projection $\varpi_J : \U \longrightarrow \U_J$ associated to the decomposition $\U = (\U \cap \L_J) \, \U_J$ we obtain
\begin{equation}\label{gcell} \U_J \cap x\B yw \B (v\, {}^F x)^{-1} \, = \, \displaystyle \hskip-3mm \bigsqcup_{\gamma \in \Gamma_{x^{ -1}v \, {}^Fx}} \hskip-3mm \varpi_J \big({}^x \Omega_\gamma \cdot (\U \cap {}^{v\, {}^Fx} \U ) \big) \, = \, \displaystyle \hskip-3mm \bigsqcup_{\gamma \in \Gamma_{x^{ -1}v \, {}^Fx}} \hskip-3mm \Upsilon_\gamma.
\end{equation}
\noindent In many interesting examples, the intersection  $(x\P_I w \, {}^F \P_I  \, {}^F x^{-1} v^{-1}) \cap \U_J$ will always consist of at most one cell $\Upsilon_\gamma$, which will be isomorphic to $(\mathbb{G}_m)^r \times \mathbb{A}_s$ for some integers $r,s$. Note that in this case, the cell is automatically stable by the action of $(\L_J \cap \P_{K_x} )\cap {}^{v F} (\L_J \cap \P_{K_x})$ by conjugation. If in addition one can find an equivariant embedding $\Upsilon_\gamma \subset \mathbb{A}_{r+s}$, then the cohomology of $U_J \backslash \X_x$ can be obtained by shifts of the cohomology of $\X_{L_J}(K_x,vF)$. We shall not make this claim more precise as we will  encounter only the cases where $r=0$ or $1$.

\begin{rmk}\label{rmktest}The decomposition \ref{gcell} gives a combinatorial test for the emptyness of a piece $\X_x$: it is non-empty if and only if there exist $y \in W_I$ and an $x$-distinguished subexpression $\gamma$ of $yw$ such that the product of the elements of $\gamma$ lies in $x^{-1} W_J \, {}^F x$.
\end{rmk}

\subsection{Examples\label{sec24}}

In this section we give examples for which the previous method is effective. Some of them will nevertherless suggest that one should rather work with the variety $\widetilde \X(I,\dot w F)$ instead of $\X(I,wF)$. 

\mk

\noindent \thesubsection.1. \textbf{Fibers are affine spaces.} Let $J$ be an $F$-stable subset of $S$. Assume that there exists a $J$-reduced-$I$ element $x$ such that $xw\ {}^Fx^{-1} \in W_J$ and let $v$ be the corresponding $W_{K_x}$-reduced element. Then we have $U_J \backslash \Z_x = \Z_{x}^v$ and the map 

\centers{$ \mathfrak{Z}_x^v  \twoheadrightarrow \X_{\L_J}(K_x,vF)$}

\noindent has affine fibers. In particular, the cohomology of the varieties $U_J \backslash \X_x$ and $ \X_{\L_J}(K_x,vF)$ differ only by a shift.

\sk

 Let  $v' \in W_J$. We start by showing that the intersection $x\P_I w \, {}^F \P_I \, {}^Fx^{-1} \cap \U_J v'$ is empty if $v'$ and $v$ are not in the same $W_{K_x}$-coset of $W_J$. Since $wF$ normalises $I$, the element $xw\, {}^F x^{-1}F$ normalises $W_{K_x}$ and so does $vF$. Thus we can write

\centers{$ x\P_I w \, {}^F \P_I \, {}^F x^{-1} \cap \U_J v' \, = \,  x\B w\, {}^F W_I \B\, {}^F x^{-1} \cap \U_J v' = ({}^x\B)  v \, {}^F x \, {}^F W_I \B\, {}^F x^{-1} \cap \U_J v'.$}

\noindent By multiplying by ${}^F x \B$, we observe that if this set is non-empty, then one of the following double Bruhat cells

\centers{$ ({}^x\B)  v \, {}^F x \, {}^F W_I \B \cap \B v' \, {}^F x \B$}

\noindent is also non-empty. By Theorem \ref{deodec}, this means that there exists an $x^{-1}$-distin\-guished subexpression $\gamma$ of $v'\, {}^F x$ such that the product of the elements of $\gamma$ lies in the coset $v \, {}^F x \,{}^F W_I$. Since $x^{-1}$ is reduced-$J$, this subexpression has to start with a reduced decomposition of $v'$. The product of its elements is therefore of the form $v' \, {}^F x'$ with $x' \leq x$ for the Bruhat order. But then $v' \, {}^F x' \in v \, {}^F x \,{}^F W_I$ so that $x'$ is in the double coset $W_J x W_I$. This forces $x=x'$  since $x$ is the minimal element of this coset. Now, since $W_{K_x} = W_J \cap (W_I)^x$, the condition $v' \, {}^F x \in v\, {}^F x \, {}^F W_I$ implies $v' \in v \, {}^F W_{K_x}$ which, with $vF$-normalising $W_{K_x}$ is equivalent to $v' \in W_{K_x} v$.

\sk

Now, if we assume that $v'$ is $K_x$-reduced, we must have $v'=v$. In this case, the intersection $x\P_I w \, {}^F \P_I \, {}^Fx^{-1} \cap \U_J v$ is just $x \B x^{-1} v\, {}^F x \B \, {}^F x^{-1} \cap \U_J v$. The Curtis-Deodhar cell $\Omega_\gamma$ associated to the unique $x$-distinguished subexpression of $ x^{-1} v\, {}^F x$ giving $ x^{-1} v\, {}^F x$ is contained in $\U \cap \U^x$. Since the product ${}^x \Omega_\gamma \cdot (\U\cap {}^{v\, {}^Fx} \U)$ is stable by left multiplication by $\U \cap {}^x \U$, we deduce that 

\centers{$ x \B x^{-1} v\, {}^F x \B \, {}^F x^{-1} v^{-1} \cap \U = (\U \cap {}^x \U) \cdot (\U \cap {}^{v \,{}^F x} \U).$}

\noindent Finally, we can write $\U \cap {}^{v \,{}^F x} \U = (\U \cap \L_J \cap {}^{v \,{}^F x} \U )\cdot ( \U_J\cap {}^{v \,{}^F x} \U)$ and use the fact that $\U \cap \L_J \subset \U \cap {}^x \U$ to obtain

\centers{$ x\P_I w \, {}^F \P_I \, {}^F x^{-1} v^{-1} \cap \U_J = (\U_J \cap {}^x \U) \cdot (\U_J \cap {}^{v \,{}^F x} \U).$}

\noindent This proves that the fibers of $U_J \backslash \Z_x \, / \,(\L_J \cap \P_{K_x})  \twoheadrightarrow \X_{\L_J}(K_x,vF)$ are affine spaces of  same dimension.

\begin{rmk}\label{attentionlong}Note that the previous statement remains true if we replace ${}^F x$ by $x'$ with $\ell(x') = \ell(x)$. More precisely, if $I^w = I'$ and $xwx'^{-1} = v \in W_J$ is such that $(K_x)^v = K_{x'}$ then $x\P_I w \, \P_{I'} \, x'^{-1} v'^{-1} \cap \U_J$ is empty unless $v'$ and $v$ are in the same $W_{K_x}$-coset and in that case

\centers{$ x\P_I w \, \P_{I'} \, x'^{-1} v^{-1} \cap \U_J = (\U_J \cap {}^x \U) \cdot (\U_J \cap {}^{v x'} \U).$}

\noindent The condition $\ell(x) =  \ell(x')$ is essential, as several $W_{K_x}$-cosets of $W_J$ might be involved otherwise.

\end{rmk}

\sk

\noindent \thesubsection.2. \textbf{Coxeter elements for split groups.} Let $\{t_1,\ldots,t_n\}$ be the set of simple reflections associated to the basis $\Delta$ of the root system. Let $w=t_1 \cdots t_n$ be a  Coxeter element. We claim that all the pieces of $\X(w)$ but one are empty: by Remark \ref{rmktest} applied to $J = \emptyset$, the quotient $U \backslash \X_x$ is non-empty if and only if there exists an $x$-distinguished subexpression of $w$ whose product is trivial. But the only subexpression of $w$ whose product is trivial is $(1,1,\ldots,1)$, and it is $x$-distinguished for $x= w_0$ only.

\sk

Now let $J$ be a subset of $S$ and let $x = w_J w_0$ be the element of minimal length in $W_J w_0$. Let $v \in W_J$ be such that there exists an $x$-distinguished subexpression of $w$ whose product is $v^x \in (W_J)^{w_0}$. Denote by  $\widetilde J= \{t_{j_1},\ldots,t_{j_m}\}$ the conjugate of $J$ by $w_0$. Then $\gamma_i = t_i $ forces $t_i \in  \widetilde J$; furthermore, since $\gamma$ is $x$-distinguished then $\gamma_i = 1$ forces $t_i \notin \widetilde J$. We deduce that such a subexpression is unique and that $v= {}^x (t_{j_1} \cdots t_{j_m})$ is a Coxeter element of $W_J$.

\sk

For this subexpression, the cell $\Omega_\gamma$ is the ordered product of the groups $\U_i = u_{\gamma_1 \cdots \gamma_i(-\alpha_i)}(?)$ where $? = \F$ is  $\gamma_i \neq 1$ and $? = \F^\times$ otherwise. Note that when $i < j_b$ and $t_i \notin \widetilde J$, the groups $\U_{i}$ and $\U_{j_b}$ commute. Indeed, a positive combination of $\gamma_1 \cdots \gamma_{i}(-\alpha_{i}) = t_{j_1} \cdots t_{j_{a}}(-\alpha_{i})$ and $\gamma_1 \cdots \gamma_{j_b}(-\alpha_{j_b}) = t_{j_1} \cdots t_{j_{b-1}}(\alpha_b)$ is never a root since a positive combination of $-\alpha_i \in S\smallsetminus \widetilde J$ and $t_{j_{a+1}} \cdots t_{j_{b-1}}(\alpha_b) \in \Phi_{\widetilde J}^+$ never is. Furthermore, $\U\cap {}^{v x} \U = \L_J \cap \U \cap {}^v \U$ and it is not difficult to show that this group commutes with the groups ${}^x \U_i$ whenever $t_i \notin \widetilde J$. As a consequence

\centers{$ \Upsilon_\gamma \, = \, \varpi_J\big({}^x \Omega \cdot \U \cap {}^{ vx} \U\big) \, =\, \displaystyle \prod_{t_i \in S \smallsetminus \widetilde J} \, u_i(\F^\times).$}

\noindent We deduce that the morphism $U_J \backslash \Z_x = U_J \backslash \X(w) \longrightarrow \X_{\L_J}(v)$ has fibers isomorphic to $(\mathbb{G}_m)^{|S|-|J|}$. In \cite{Lu}, Lusztig actually constructs an isomorphism between $U_J \backslash \X(w)$ and  $\X_{\L_J}(v) \times (\mathbb{G}_m)^{|S|-|J|}$, but which is not compatible with the action of $L_J$. However, he proves that the cohomology groups of these two varieties are isomorphic as $L_J$-modules \cite[Corollary 2.10]{Lu}.

\mk

\noindent \thesubsection.3. \textbf{$n$-th roots of $\boldsymbol \pi$ for groups of type $A_n$.} Assume that $(\G,F)$ is a split group of type $A_n$. We denote by $t_1,\ldots,t_n$ the simple reflections of $W$ with the convention that there exists an isomorphism $W\simeq \mathfrak{S}_{n+1}$ sending the reflection $t_i$ to the transposition $(i,i+1)$. Let $J = \{t_1,\ldots,t_{n-1}\}$ and $w = t_1 t_2 \cdots t_{n-1} t_n t_{n-1}$ be a $n$-regular element. The $J$-reduced elements are of the form $x_i = t_n t_{n-1} \ldots t_i$ for $i=1,\ldots,n+1$. If $i\neq 1,n$, then $x_i < x_i t_1 < x_i t_1 t_2 < \cdots < x_i w$ and therefore the only $x_i$-distinguished subexpression of $w$ is $(t_1,t_2, \ldots, t_n,t_{n-1})$. Since ${}^{x_i} w \notin W_J$, we deduce from Remark \ref{rmktest} that the pieces $\X_{x_i}$ are empty. 

\sk

If $i=n$, then there are two $x_n$-distinguished subexpressions of $w$, namely $(t_1,t_2, \ldots, t_n,t_{n-1})$ and $(t_1,t_2, \ldots, t_n,1)$. But only one will give an element of $W_J$, since ${}^{x_n} (t_1 \cdots t_n) \notin W_J$ whereas ${}^{x_n} w = t_1 t_2 \cdots t_{n-1}$. By the example  \hyperref[sec24]{\ref*{sec24}.1}, the cohomology of $U\backslash \X_{x_n}$ is then, up to shift, isomorphic to the cohomology of the Coxeter variety $\X_{\L_J}(t_1 \cdots t_{n-1})$.

\sk

If $i = 1$ then $x_1=w_J w_0$. In that case there are many distinguished subexpressions of $w$. However, only one has a product in $(W_J)^x = W_{\{t_2, \ldots,t_n\}}$. Indeed, that condition forces $\gamma_1$ to be $1$ and therefore $\gamma=(1,t_2,\ldots,t_n,t_{n-1})$ is the only $x_1$-distinguished subexpression of $w$ whose product lies in $(W_J)^x$. For that subexpression, the Curtis-Deodhar cell ${}^x(\Omega_\gamma)$ is the product of $u_{\alpha_1 + \cdots + \alpha_n}(\mathbb{G}_m)$ with some affine subspace of $\L_J \cap \U$. Since $\alpha_1 + \cdots + \alpha_n$ is the longest root, the group $\L_J \cap \U$ acts trivially on $\U_{\alpha_1 + \cdots + \alpha_n}$ and we obtain $\Upsilon_\gamma = u_{\alpha_1 + \cdots + \alpha_n}(\mathbb{G}_m) \simeq \mathbb{G}_m$.

\sk

As in the Coxeter case, the varieties $U_J \backslash \X_{x_1}$ and $\X_{\L_J}( t_1 t_2 \cdots t_{n-2}t_{n-1} t_{n-2}) \times \mathbb{G}_m $ can be shown to have the same cohomology (see \cite[Proposition 8.17]{DM2}) but are non-isomorphic as $L_J$-varieties. However, there is a good evidence that such an isomorphism should hold for some Galois coverings of $\X$ and $\mathbb{G}_m$. We shall make this statement precise in the next section (see Section \ref{sec33} for an application to this example).

\section{\label{sec3}Lifting the decomposition to \texorpdfstring{$\widetilde \X(I,\dot wF)$}{Y(I,wF)}}

Recall that  one can associate to $\mathbf{I} \mathop{\longrightarrow}\limits^\mathbf{b} {}^F \mathbf{I}$ a variety $\widetilde \X(\mathbf{I},\mathbf{b} F)$ together with a Galois covering $\pi_{\mathbf{b}} \, : \, \widetilde \X(\mathbf{I},\mathbf{b} F) \longrightarrow \X(\mathbf{I},\mathbf{b} F)$ with Galois group $\L_I^{t(\mathbf{b}) F}$. Using this map one can pullback the previous constructions. More precisely, one can define the varieties $\widetilde \X_\mathbf{x} = \pi_\mathbf{b}^{-1} (\X_\mathbf{x})$ in order to obtain a partition of $\widetilde \X(\mathbf{I},\mathbf{b} F)$ into locally closed $P_J \times \L_I^{t(\mathbf{b})F}$-subvarieties. Furthermore, we can lift  the definition of $\Z_\mathbf{x}$ by considering the following cartesian diagram:

\begin{equation}\label{diagxz}\begin{psmatrix}  \widetilde \Z_\mathbf{x} & \Z_\mathbf{x} \\
					\widetilde \X_\mathbf{x} & \X_\mathbf{x}  
\psset{arrows=->>,nodesep=3pt} 
\everypsbox{\scriptstyle} 
\ncline{1,1}{1,2}^{/ \, \L_I^{t(\mathbf{b})F}}
\ncline{2,1}{2,2}^{/ \, \L_I^{t(\mathbf{b})F}}
\ncline{1,1}{2,1}
\ncline{1,2}{2,2}		
\end{psmatrix}
\end{equation}
\noindent For example, when $\mathbf{b} = \mathbf{w} \in \mathbf{W}$,  we can identify $\P_I / \U_I$ with $\L_I$ to construct $\widetilde \Z_x$ explicitly by

\centers{$ \widetilde \Z_x = \big\{ (p,m) \in \P_J \times {}^x \L_I \, \big| \, (pm)^{-1}\, {}^F (pm) \in \dot x \big(\U_I \dot w \, {}^F \U_I\big) \, {}^F \dot x^{-1}\big\}.$}

\noindent where the action of $\L_J \cap {}^x \P_I$ is given by $(p,m) \cdot l = (pl,l^{-1}m)$ with the convention that $\L_J \cap {}^x \U_I$ acts trivially on $m$. With this description, the map $\widetilde \Z_x \longrightarrow \widetilde \X_x$ is then given by $(p,m) \longmapsto pm \dot x\U_I$. Unlike the case of $\X_\mathbf{x}$, it is unclear whether there always exists a precise relation between quotients of $\widetilde \X_\mathbf{x}$ and smaller parabolic Deligne-Lusztig varieties. We shall therefore restrict ourselves to the following particular cases:

\begin{description}

\item[\textbf{Case 1.}] If $v= xw {}^F x^{-1}$ lies in the parabolic subgroup $W_J$ then, as in the example \hyperref[sec24]{\ref*{sec24}.1}, the cohomology of  $U_J \backslash \widetilde \X_x$ is related to the cohomology of $\widetilde \X_{L_J}(K_x,\dot v F)$. In this situation $\L_{K_x}^{\dot v F} \simeq (\L_I \cap \L_J^x)^{\dot w F}$ is a split Levi subgroup of   $\L_I^{\dot w F}$ so that one can modify $\widetilde \X_{L_J}(K_x,\dot v F)$ in order to obtain an action of $\L_I^{\dot w F}$  by Harish-Chandra restriction.

\item[\textbf{Case 2.}] If $w = sw'$ and $v= xw' {}^F x^{-1}$ lies in $W_J$, one can relate the varieties $U_J \backslash \widetilde \X_{x}$ and $\widetilde \X_{L_J}(K_x,\dot v F)$ (under some extra conditions on $s$ and $x$). The presence of $s$ is reflected by a Galois covering  of $\G_m$ which explains the geometry of the fiber in the examples \hyperref[sec24]{\ref*{sec24}.2} and \hyperref[sec24]{\ref*{sec24}.3}.  This covering carries actions of $\L_I^{w F}$ and $\L_I^{w' F}$ giving rise to a natural isomorphism $\L_I^{wF}/N \simeq \L_I^{\dot w' F}/N'$ as in \cite{BR1} in the case of tori.

\end{description}

 It turns out that this two rather specific cases are sufficient to study a large number of interesting Deligne-Lusztig varieties, namely the ones that are associated in \cite{BMi2} and \cite{DM3} to principal $\Phi_d$-blocks when $2d$ is strictly bigger than the Coxeter number. We shall give some examples in the Appendix for exceptional groups. The case of classical groups will be treated in a subsequent paper.

\subsection{Case 1 - Fibers are affine spaces}

We start under the assumptions of the example \hyperref[sec24]{\ref*{sec24}.1}. We assume that $x$ and $w$ satisfy $x w \, {}^F x ^{-1} \in W_J$. For simplicity, we shall also assume that this element is $W_{K_x}$-reduced, as it will always be the case in the examples.

\begin{prop}\label{ypartcase2}Assume that $v= x w \, {}^F x ^{-1}$ is a $W_{K_x}$-reduced element of $W_J$. Let $e = \dim (\U_J^x \cap {}^w \U \cap \U^-)$. Then there exists a group isomorphism $\L_I^{\dot w F} \simeq ({}^x \L_I)^{\dot v F}$ such that we have the following isomorphism in $D^b(\Lambda L_J \times( \L_{I}^{\dot wF} \rtimes \langle F \rangle)$-$\mathrm{mod})$:

\centers{$\Rgc\big(U_J\backslash \widetilde \X_{x},\Lambda\big)[2e](-e) \, \simeq \, \Rgc\big(\widetilde \X_{\L_J}(K_x, \dot v F), \Lambda\big)\  {\ol}_{\Lambda \P_J \cap ({}^x \L_{I})^{\dot v F}}\ \Lambda \L_{I}^{\dot w F}.$}
\end{prop}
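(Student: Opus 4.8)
The plan is to start from the explicit description of $\widetilde\Z_x$ given just before the statement, together with the cartesian diagram \eqref{diagxz}, and to push it through the quotient by $U_J$. Concretely, I would first record that $\Rgc(U_J\backslash\widetilde\X_x,\Lambda)\simeq\Rgc(U_J\backslash\widetilde\Z_x/(\L_J\cap\P_{K_x}),\Lambda)[2\dim(\U_J\cap{}^x\U)]$, lifting \eqref{xxeq} along the torsor $\pi_{\mathbf{w}}$; this reduces the problem to understanding $U_J\backslash\widetilde\Z_x$ as an $L_J\times(\L_I^{\dot wF}\rtimes\langle F\rangle)$-variety. Writing $p=ul\in\U_J\L_J$ as in Section~\ref{sec21}, and using the fact (from the example \hyperref[sec24]{\ref*{sec24}.1}, more precisely from the identity $x\P_I w\,{}^F\P_I\,{}^Fx^{-1}v^{-1}\cap\U_J=(\U_J\cap{}^x\U)\cdot(\U_J\cap{}^{v\,{}^Fx}\U)$) that only the single double coset indexed by $v$ occurs, I get that $U_J\backslash\widetilde\Z_x$ fibers over the ``lifted'' variety $\{(l,m)\in\L_J\times{}^x\L_I \mid (lm)^{-1}\,{}^F(lm)\in\dot v\,(\text{stuff in }{}^x\U_I)\}$ with fiber an affine space of dimension $\dim(\U_J\cap{}^x\U)+\dim(\U_J\cap{}^{v\,{}^Fx}\U)$; the precise affine dimension is where the number $e=\dim(\U_J^x\cap{}^w\U\cap\U^-)$ comes in, since after dividing out the part of the fiber that is already accounted for by the shift $[2\dim\U_J\cap{}^x\U]$ one is left with $e$ extra affine directions. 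An étale-cohomology fibration argument (fibers affine spaces, so cohomology is constant up to the shift $[2e](-e)$) then identifies $\Rgc(U_J\backslash\widetilde\X_x,\Lambda)[2e](-e)$ with the cohomology of that lifted base variety.

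Next I would identify the lifted base variety with a parabolic Deligne--Lusztig variety for $\L_J$. Since $v=xw\,{}^Fx^{-1}$ normalises $K_x$ and is $K_x$-reduced (hypothesis), the quotient by $\L_J\cap\P_{K_x}$ of $\{l\in\L_J\mid l^{-1}\,{}^Fl\in\L_J\cap\P_{K_x}v\,{}^F(\L_J\cap\P_{K_x})\}$ is exactly $\X_{\L_J}(K_x,vF)$, and keeping track of the torsor by $\L_{K_x}^{\dot vF}$ one gets $\widetilde\X_{\L_J}(K_x,\dot vF)$. The key point is to match the two ``$m$-coordinates'': conjugation by $\dot x$ gives a concrete algebraic group isomorphism ${}^x\L_I\xrightarrow{\sim}\L_I$ and, because $x\dot w\,{}^Fx^{-1}=\dot v$ up to a controlled torus element (here the normalisation of the Tits section and the $W_{K_x}$-reducedness of $v$ are used to make the comparison clean), it intertwines $\dot wF$-conjugation on $\L_I$ with $\dot vF$-conjugation on ${}^x\L_I\cong\L_I$; this is the promised isomorphism $\L_I^{\dot wF}\simeq({}^x\L_I)^{\dot vF}$. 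Under it, $\L_{K_x}=\L_I\cap{}^{x^{-1}}\L_J$ (equivalently $(\L_I\cap\L_J^x)$ in the paper's notation) is an $F$-stable Levi, so $\L_{K_x}^{\dot vF}$ is a split Levi of $\L_I^{\dot wF}$, and the Galois group of $\widetilde\X_{\L_J}(K_x,\dot vF)$ over $\X_{\L_J}(K_x,vF)$ is precisely $\L_{K_x}^{\dot vF}=\P_J\cap({}^x\L_I)^{\dot vF}$.

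Finally I would account for the residual $\L_I^{\dot wF}$-action. The variety $\widetilde\X_x$ carries an $\L_I^{\dot wF}$-action (as $\widetilde\Z_x$ does, via $m\mapsto$ right translation), whereas $\widetilde\X_{\L_J}(K_x,\dot vF)$ only sees the smaller group $\L_{K_x}^{\dot vF}\cong\P_J\cap({}^x\L_I)^{\dot vF}$. The passage between the two is exactly an induction: $U_J\backslash\widetilde\Z_x$, as a variety with $\L_I^{\dot wF}$-action, is obtained from its $\L_{K_x}^{\dot vF}$-stable ``core'' by the twisted-product construction $(-)\times_{\L_{K_x}^{\dot vF}}\L_I^{\dot wF}$, and on cohomology this becomes $\ -\ \ol_{\Lambda(\P_J\cap({}^x\L_I)^{\dot vF})}\ \Lambda\L_I^{\dot wF}$. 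Putting the three steps together yields the claimed isomorphism in $D^b(\Lambda L_J\times(\L_I^{\dot wF}\rtimes\langle F\rangle)\text{-}\mathrm{mod})$, with the $F$-equivariance coming for free since every identification above (the fibration, the $\dot x$-conjugation, the twisted product) is compatible with the Frobenius. The main obstacle I expect is the second step: getting the comparison isomorphism $\L_I^{\dot wF}\simeq({}^x\L_I)^{\dot vF}$ compatible with the Tits section and all the torus corrections, and checking that the affine fibration in step one is genuinely equivariant for $L_J\times\L_I^{\dot wF}$ so that the cohomology shift $[2e](-e)$ is the \emph{only} discrepancy; the bookkeeping of which unipotent pieces are absorbed into the shift versus which survive in $e$ is delicate and is really the heart of the proof.
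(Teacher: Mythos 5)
Your overall architecture matches the paper's: lift the shift formula to $\widetilde\Z_x$, fiber $U_J\backslash\widetilde\Z_x/(\L_J\cap{}^x\P_I)$ over a twisted product $\widetilde\X_{\L_J}(K_x,\dot vF)\times_{\P_J\cap({}^x\L_I)^{\dot vF}}({}^x\L_I)^{\dot vF}$ with affine fibers, and conclude by an \'etale fibration argument plus the isomorphism $\L_I^{\dot wF}\simeq({}^x\L_I)^{\dot vF}$ (which the paper produces by Lang's theorem, choosing $n\in N_\G(\T)$ with $\dot v=n\dot w\,{}^Fn^{-1}$, rather than by conjugating by $\dot x$ directly --- your ``controlled torus element'' is exactly this correction).

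However, there is a genuine gap at the central step. You assert that $U_J\backslash\widetilde\Z_x$ ``is obtained from its $\L_{K_x}^{\dot vF}$-stable core by the twisted-product construction,'' but this is precisely what has to be proved, and it is where almost all of the work lies. Your ``lifted base variety'' $\{(l,m):(lm)^{-1}\,{}^F(lm)\in\dot v\cdot(\text{stuff in }{}^x\U_I)\}$ entangles the coordinates $l$ and $m$; to exhibit the twisted-product structure one must show that, within its $\P_J\cap{}^x\L_I$-orbit, every $(p,m)$ can be normalised so that $l$ defines a point of $\widetilde\X_{\L_J}(K_x,\dot vF)$ \emph{and} $m$ is $\dot vF$-fixed, the normalisation being unique up to $\P_J\cap({}^x\L_I)^{\dot vF}$. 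The crux is the containment $m\,{}^{\dot vF}m^{-1}\in\U_J\cap{}^x\L_I$, which the paper extracts from the equality $x\P_Iw\,{}^F\P_I\,{}^Fx^{-1}v^{-1}\cap\U_J=(\U_J\cap{}^x\U)\cdot(\U_J\cap{}^{v\,{}^Fx}\U)$ by further decomposing $\U_J\cap{}^{v\,{}^Fx}\U$ and using that $wF$ normalises $\L_I$ and $\L_I\cap\U$; nothing in your outline addresses this. Two smaller points: the amalgamating subgroup is genuinely $\P_J\cap({}^x\L_I)^{\dot vF}$ (its unipotent part $\U_J\cap({}^x\L_I)^{\dot vF}$ records the ambiguity in normalising $m$), and it only \emph{acts through} its Levi quotient $\L_{K_x}^{\dot vF}=\L_J\cap({}^x\L_I)^{\dot vF}$ on $\widetilde\X_{\L_J}(K_x,\dot vF)$ --- the two groups are not equal as you write. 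Finally, the fiber computation is done in the paper by descending to the non-tilde level via the free $({}^x\L_I)^{\dot vF}$-action and invoking the example of Section 2.4.1, which is cleaner than trying to count affine directions upstairs; your heuristic for where $e$ comes from is plausible but would need that descent to be made rigorous.
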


\begin{proof} Since $v= x w \, {}^F x ^{-1}$, one can use Lang's Theorem to find an element $n \in N_\G(\T)$ such that $\dot v = n \dot w \, {}^F n^{-1}$. Then the conjugation by $n$ induces an isomorphism $\L_I^{\dot w F} \simeq ({}^x\L_I)^{\dot v F}$. Moreover, the map $(p,m) \in \widetilde \Z_x \, \longmapsto (p,m\dot x n^{-1})$ induces an isomorphism

\centers{$ \widetilde \Z_x  \simeq \big\{ (p,m) \in \P_J \times {}^x \L_I \, \big| \, (pm)^{-1}\, {}^F (pm) \in n \big(\U_I \dot w \, {}^F \U_I\big) \, {}^F n^{-1}\big\}$}

\noindent so that we can work with $n$ instead of $\dot x$. We shall relate the cohomology of this variety to the cohomology of $\widetilde \X_{\L_J}(K,\dot v F)$. For that purpose, we shall construct a morphism $\Psi \, : \, \widetilde \Z_x \longrightarrow \widetilde \X_{\L_J}(K,\dot v F) \times_{ \P_J \cap ({}^x \L_{I})^{\dot v F}}\  \L_{I}^{\dot w F}$ which will factor through $\widetilde \Z_x \longrightarrow U_J \backslash \widetilde \Z_x / \L_J \cap {}^x \P_I$ and then study its fibers. 

\sk

Let $(p,m) \in \widetilde \Z_x$. Since $p^{-1} \, {}^F p$ lies in $x \P_I w \, {}^F \P_I \, {}^F x^{-1}$ one can proceed as in the example \hyperref[sec24]{\ref*{sec24}.1} to show that it  also lies in the double coset $\P_{K_x} v \, {}^F \P_{K_x}$. If we write $p=ul \in \U_J \L_J$, we deduce that  $l^{-1} \, {}^F l \in (\L_J \cap {}^x \P_I)\, v \, {}^F( \L_J \cap {}^x \P_I)$. Therefore, there exists $l' \in \L_K = \L_J \cap {}^x {\L_I}$, unique up to multiplication on the right by $\L_K^{\dot vF}$ such that $(ll')^{-1} \, {}^F(ll') \in (\L_J \cap {}^x \U_I) \, \dot v \, {}^F (\L_J \cap {}^x \U_I)$. As a consequence, any element of $\widetilde \Z_x/\L_J \cap {}^x \P_I$ can be written $[p;m]$ where $p=ul$ is such that $l$ yields an element of $\widetilde \X_{\L_I}(K_x,\dot vF)$. For such a representative, we have

\centers{$p^{-1} \, {}^Fp \, = \, {}^{l^{-1}}(u^{-1} \, {}^Fu) \, (l^{-1} \, {}^F l) \, \in \,  (\L_J \cap {}^x \U_I)\cdot \U_J \, \dot v \, {}^F( \L_J \cap {}^x \U_I).$}

\noindent We can actually be more precise on the contribution of $\U_J$ in this decomposition. Indeed, we have seen in the example  \hyperref[sec24]{\ref*{sec24}.1} that  $x\P_I w \, {}^F \P_I \, {}^F x^{-1} v^{-1} \cap \U_J = (\U_J \cap {}^x \U) \cdot (\U_J \cap {}^{v \,{}^F x} \U)$ and hence

\centers{$p^{-1} \, {}^Fp \,  \in \,  (\L_J \cap {}^x \U_I)\cdot (\U_J \cap {}^x \U) \cdot (\U_J \cap {}^{v \,{}^F x} \U)\,  \dot v \, {}^F( \L_J \cap {}^x \U_I).$}

\noindent Now, the condition $(p,m) \in \widetilde \Z_x$ can be written $ p^{-1} \, {}^Fp \, \in \, m \, {}^{\dot v F} m^{-1} ({}^x \U_I)\, \dot v \, {}^F ({}^x \U_I)$ and we deduce that 

\centers{$  m \, {}^{\dot v F} m^{-1} \, \in \, {}^x \U_I \cdot (\U_J \cap {}^x \U) \cdot (\U_J \cap {}^{v \,{}^F x} \U) \cdot {}^{v\,{}^F x F} \U_I . $}

\noindent We want to show that $m \, {}^{\dot v F} m^{-1} \in \P_J$. For that purpose, we can decompose the intersection $\U_J \cap {}^{v \,{}^F x} \U$ into $\big( \U_J \cap {}^{v \,{}^F x F} (\L_I \cap \U) \big) \cdot (\U_J \cap {}^{v \,{}^F x F} \U_I)$ and we observe that $ \U_J \cap {}^{v \,{}^F x F} (\L_I \cap \U)  \subset {}^x \U$. Indeed, $x^{-1} vF(x) = w$ and by assumption $wF$ stablizes $\L_I \cap \U$. We deduce that 

\centers{$  m \, {}^{\dot v F} m^{-1} \, \in \, {}^x \U_I \cdot (\U_J \cap {}^x \U) \cdot {}^{v\,{}^F x F} \U_I. $}

\noindent Note that ${}^x \U_I \cdot (\U_J \cap {}^x \U)$ is contained in ${}^x \P_I$. In particular, the contribution of ${}^{v\,{}^F x F} \U_I$ in the decomposition of $m \, {}^{\dot v F} m^{-1}$ should also lie in ${}^x \P_I$. Since $wF$ normalises $\L_I$, the intersection $ {}^{v\,{}^F x F} \U_I \cap {}^x \P_I$ is contained in  ${}^x \U_I$. Finally, since $\L_I$ normalises $\U_I$ we deduce that $ m \, {}^{\dot v F} m^{-1} \, \in \, \U_J \cap {}^x \L_I$.
\sk

Therefore there exists $u' \in \U_J \cap {}^x{\L_I}$, unique up to multiplication by $\U_J \cap ({}^x\L_I)^{\dot vF}$ on the right, such that $u'^{-1}m \in ({}^x \L_I)^{\dot v F}$. To summarize, we have shown that to any pair $(p,m) \in \widetilde \Z_x$  one can associate a pair $(p',m')$ such that

\begin{itemize}

\item $(p,m)$ and $(p',m')$ are in the same  $\P_J \cap {}^x \L_I$-orbit, that is there exists $q \in \P_J \cap {}^x \L_I$ such that $p'=pq$ and $m' = q^{-1} p$;

\item the image of $p'$ by the composition $\P_J \longrightarrow \P_J / \U_J \simeq \L_J \longrightarrow \L_J / (\L_J \cap {}^x \U_I) $ lies in $\widetilde \X_{\L_J}(K,\dot v F)$;

\item  $m' \in {}^x \L_I$ is invariant by $\dot v F$.  

\end{itemize}

\noindent Moreover, if $(p'',m'')$ is any other pair satisfying the same conditions, then there exists $q' \in \P_J \cap ({}^x \L_I)^{\dot vF}$ such that $(p'',m'') = (p'q', q'^{-1}m')$ which means that $(p',m')$ is well defined in $\P_J \times_{\P_J \cap ({}^x \L_I)^{\dot v F}} ({}^x \L_J)^{\dot v F}$. Let us define now the morphism $\Psi$ by 

\centers{$ \Psi \, : \, (p,m) \in \widetilde \Z_x \, \longmapsto \, \big[\pi_J(p')\, (\L_J\cap {}^x \U_I)\, ; m'\big] \, \in \widetilde \X(K,\dot v F) \times_{\P_J \cap ({}^x \L_I)^{\dot v F}} ({}^x \L_J)^{\dot v F}$}

\noindent where the action of $\P_J \cap ({}^x \L_I)^{\dot v F}$ on $\widetilde \X_{\L_J}(K_x,\dot v F)$ is just the inflation of the action of $\L_{K_x}^{\dot vF} = \L_J \cap ({}^x \L_I)^{\dot v F}$. It is clearly surjective and equivariant for the actions of $P_J$ on the left and $({}^x \L_I)^{\dot v F}$ on the right. Furthermore, if $(p_1,m_1)$ and $(p_2,m_2)$ are in the same orbit under $\L_J \cap {}^x \P_I$, then $(p_1',m_1')$ and $(p_2',m_2')$ are in the same orbit under $\P_J \cap {}^x \P_I$. Let $q \in \P_J \cap {}^x \P_I$ be such that $(p_2',m_2') = (p_1' q , q^{-1} m_1')$  and write $q= u  l \in (\P_J \cap {}^x \U_I) \cdot (\P_J \cap {}^x \L_I)$. Then $l = m_1' {m_2'}^{-1} \in ({}^x \L_I)^{\dot vF}$ so that  $\Psi(p_1,m_1) = \Psi(p_2,m_2)$.  In other words, $\Psi$ induces a morphism

\centers{$ \widetilde \Z_x \, / \, \L_J \cap {}^x \P_I \, \longrightarrow \, \widetilde \X_{\L_J}(K,\dot v F) \times_{\P_J \cap ({}^x \L_I)^{\dot v F}} ({}^x \L_J)^{\dot v F}$}

\noindent which, in turn, yields a surjective equivariant morphism  

\centers{$U_J \backslash \widetilde \Z_x \, / \, \L_J \cap {}^x \P_I \, \longrightarrow \, \widetilde \X_{\L_J}(K,\dot v F) \times_{\P_J \cap ({}^x \L_I)^{\dot v F}} ({}^x \L_J)^{\dot v F}.$}

To conclude, it remains to study the fibers of this morphism. Since  $({}^x \L_J)^{\dot v F}$  acts freely on both varieties, we can rather look at the fibers of the map induced on the quotient varieties. Using the diagram  \ref{diagxz}, we can check that the latter  coincides with the map $\mathfrak{Z}_x^v = U_J \backslash \Z_x \, /\, \L_J \cap {}^x \P_I  \longrightarrow \X_{\L_J}(K_x, vF)$ which has affine fibers of dimension $r + \dim \U_J \cap {}^x \U$ (see Example  \hyperref[sec24]{\ref*{sec24}.1}).  
\end{proof}

\subsection{Case 2 - Minimal degenerations}

In this section we address the problem of computing the cohomology of the piece $\widetilde \X_x$ of $\widetilde \X(I,\dot w F)$ when $x w \, {}^F x^{-1}$ is close to be an element of $W_J$. Namely, we shall consider the following situation: $w = sw' > w' $ where $s \in S$ and $v = x w \, {}^F x^{-1} \in W_J$. Under some assumption on $s$ and $w'$ we will prove that the cohomology of $U_J \backslash \X_x$ and $\mathbb{G}_m \times \X_{\L_J} (K_x, vF)$ coincide. As we have seen in the examples, these two varieties are non-isomorphic in general. However, at the level of the varieties $\widetilde \X$ we shall construct a Galois covering $\widetilde{\mathbb{G}}_m \longrightarrow \mathbb{G}_m$ and a quasi-vector bundle

\centers{$ U_J \backslash \widetilde \X_x \, \rightsquigarrow \, \widetilde \X(K_x,\dot v F) \times_{\P_J \cap ({}^x \L_I)^{\dot v F}} \widetilde{\mathbb{G}}_m$}

\noindent such that $ \widetilde{\mathbb{G}}_m / \L_I^{\dot w F} \simeq \mathbb{G}_m$. As a byproduct, we will relate the cohomology of $U_J \backslash \widetilde \X_x$ and  $\mathbb{G}_m \times \X(K_x,\dot v F) $ with coefficients in any unipotent local system.

\sk

Throughout this section, we will assume that $[\G,\G]$ is simply connected. This is not a strong assumption since it has no effect on the unipotent part of the cohomology of a Deligne-Lusztig variety (see for example \cite[Section 5.3]{BR1}).  

\mk

\noindent \thesubsection.1. \textbf{Galois coverings of tori.} Let $\mathbf{I} \mathop{\longrightarrow}\limits^\mathbf{b} {}^F \mathbf{I}$, decomposed as  $\mathbf{I} = \mathbf{I}_1 \mathop{\longrightarrow}\limits^{\mathbf{w}_1}  \mathbf{I}_2  \mathop{\longrightarrow}\limits^{\mathbf{w}_2} \cdots $ $ \mathop{\longrightarrow}\limits^{\mathbf{w}_r} \mathbf{I}_{r+1}= {}^F \mathbf{I}$. Let  us consider an element $\mathbf{c} \in B^+$ obtained by minimal degenerations of the $w_i's$: we assume that $\mathbf{c} = \mathbf{z}_1 \cdots \mathbf{z}_r$ where $z_i = \gamma_i w_i$ with $\gamma_i \in S \cup \{1\}$ and $\ell(\gamma_i w_i) \leq \ell(w)$. We will also assume that each $\gamma_i$ commutes with $I_i$ so that $\mathbf{cF}$ normalises $\mathbf{I}$. Following \cite[Section 4]{BR1}, we set $\alpha_{\mathbf{b},\mathbf{c},i} = \alpha$ if $\gamma_i = s_\alpha$ or $\alpha_{\mathbf{b},\mathbf{c},i} = 0$ if $\gamma_i = 1$ and we define the following algebraic variety

\centers{$\mathbf{S}_{\mathbf{I} , \mathbf{b},\mathbf{c}} \, = \, \left\{ (l_1,\ldots,l_r) \in \L_{I_1} \times \cdots \times \L_{I_r} \, \left| \begin{array}{l}  l_i^{-1} \big({}^{\dot w_i}l_{i+1}\big) \in \mathrm{Im}\, \alpha_{\mathbf{b},\mathbf{c},i}^\vee \ \ \text{if } 1\leq i \leq r-1 \\[4pt] 
l_r^{-1} \big({}^{\dot w_r F} l_1\big) \in  \mathrm{Im}\, \alpha_{\mathbf{b},\mathbf{c},r}^\vee \end{array} \right. \right\}\cdot$}

\noindent Note that the assumption on  $\gamma_i$ ensures that the torus $\mathrm{Im}\, \alpha_{\mathbf{b},\mathbf{c},i}^\vee$ is central in $\L_{I_i}$, and therefore $\mathbf{S}_{\mathbf{I} , \mathbf{b},\mathbf{c}}$ is an algebraic group. 

\sk

Recall that $\L_I^{t(c)F}$ can be identified with ${\L'}^{\mathbf{c}F'}$ where $\L = \L_{I_1} \times \cdots \times \L_{I_r}$ and $\mathbf{c}F' : (l_1,\cdots ,l_r) \longmapsto \big({}^{\dot z_1} l_2, \ldots, {}^{\dot z_{r-1}} l_r, {}^{\dot z_r F} l_1\big)$.  The condition $ l_i^{-1} \big({}^{\dot w_i}l_{i+1}\big) \in \mathrm{Im}\, \alpha_{\mathbf{b},\mathbf{c},i}^\vee$ is equivalent to $ l_i^{-1} \big({}^{\dot z_i}l_{i+1}\big) \in \mathrm{Im}\, \alpha_{\mathbf{b},\mathbf{c},i}^\vee$ so that we can replace $w_i$ by $z_i$ in the definition of $\mathbf{S}_{\mathbf{I} , \mathbf{b},\mathbf{c}}$. In particular, the variety  $\mathbf{S}_{\mathbf{I} , \mathbf{b},\mathbf{c}}$ defines two Galois coverings of the torus $\prod \mathrm{Im}\, \alpha_{\mathbf{b},\mathbf{c},i}^\vee$, namely $\pi_\mathbf{b} : l \longmapsto l^{-1}\, {}^{\mathbf{b} F'} l$ and $\pi^\mathbf{c} : l \longmapsto \big({}^{\mathbf{c} F'} l\big) l^{-1}$, with respective Galois groups $\L_I^{t(\mathbf{b})F}$ and $\L_I^{t(\mathbf{c})F}$. We will denote by $d = \ell(\mathbf{b})-\ell(\mathbf{c})$ the dimension of this torus. 
Note that the induced action of $\L_I^{t(\mathbf{b})F}$ and $\L_I^{t(\mathbf{c})F}$ on $\mathbf{S}_{\mathbf{I},\mathbf{b},\mathbf{c}}$ is explicitely given by

\centers{$ (m,m')\cdot (l_1,\ldots,l_r) = \big(ml_1 m'^{-1},  (m^{\dot w_1})\,  l_2 \, (m'^{-1})^{\dot z_1} ,\ldots, (m^{\dot w_1  \cdots \dot  w_{r-1}}) \, l_r \, (m'^{-1})^{\dot z_1 \cdots \dot  z_{r-1}} \big)$}

\noindent for $m \in \L_I^{t(\mathbf{b})F}$ and $m' \in \L_I^{t(\mathbf{c})F}$. 

\sk

Let $\mathbf{S}_{\mathbf{I},\mathbf{b},\mathbf{c}}^\circ$ be the identity component of $\mathbf{S}_{\mathbf{I},\mathbf{b},\mathbf{c}}$. Since $\mathbf{S}_{\emptyset, \mathbf{b},\mathbf{c}} = \T^r \cap \mathbf{S}_{\mathbf{I},\mathbf{b},\mathbf{c}}$ is an $d$-dimensional closed subvariety of $\mathbf{S}_{\mathbf{I},\mathbf{b},\mathbf{c}}$ (it is also a Galois covering of $\prod \mathrm{Im}\, \alpha_{\mathbf{b},\mathbf{c},i}^\vee$)  it must contain the identity component $\mathbf{S}_{\mathbf{I},\mathbf{b},\mathbf{c}}^\circ$. This forces the stabilizer $N$ (resp. $N'$) of $\mathbf{S}_{\mathbf{I},\mathbf{b},\mathbf{c}}^\circ$ in $\L_I^{t(\mathbf{b})F}$ (resp. $\L_I^{t(\mathbf{c})F}$) to be contained in $\T$. In particular, we can readily extend the results in \cite[Section 4.4.3]{BR1} to obtain an explicit description of $N$ and $N'$ in terms of sublattices of $Y(\T)$. For example one can check that $W_I$ acts trivially on these lattices so that $N$ are $N'$ are normal subgroups of $\L_I$.

\sk

It turns out that the covering $\mathbf{S}_{\mathbf{I}, \mathbf{b},\mathbf{c}}$ will naturally appear in the quotient of the parabolic Deligne-Lusztig varieties that we will consider. The action of $\L_I^{t(\mathbf{b})F}$ and $\L_I^{t(\mathbf{c})F}$ yields canonical isomorphisms $\L_I^{t(\mathbf{b})F}/ N \simeq \L_I^{t(\mathbf{c})F}/ N' \simeq \mathbf{S}_{\mathbf{I}, \mathbf{b},\mathbf{c}} / \mathbf{S}_{\mathbf{I}, \mathbf{b},\mathbf{c}}^\circ$. Let us write $\mathbf{S}_{\mathbf{I}, \mathbf{b},\mathbf{c}} = \L_I^{t(\mathbf{b}) F} \times_{N} \, \mathbf{S}_{\mathbf{I}, \mathbf{b},\mathbf{c}}^\circ$. The quotient of this variety by the action of $N$ (by left multiplication) is given by

\centers{$ N \backslash \mathbf{S}_{\mathbf{I}, \mathbf{b},\mathbf{c}} \, \simeq \, \L_I^{t(\mathbf{b}) F} / {N} \times \big( \prod \mathrm{Im}\, \alpha_{\mathbf{b},\mathbf{c},i}^\vee\big) \, \simeq \, \mathbf{S}_{\mathbf{I}, \mathbf{b},\mathbf{c}} / \mathbf{S}_{\mathbf{I}, \mathbf{b},\mathbf{c}}^\circ \times \big( \prod \mathrm{Im}\, \alpha_{\mathbf{b},\mathbf{c},i}^\vee\big).$}

\noindent On this quotient, $\L_I^{t(\mathbf{b}) F} /{N}$  acts on the first factor only but the action of $\L_I^{t(\mathbf{c}) F}$ is more complicated: an element $m \in \L_I^{t(\mathbf{c}) F}$ acts on  $ \prod \mathrm{Im}\, \alpha_{\mathbf{b},\mathbf{c},i}^\vee$ by mulitplication by $(m ({}^{\gamma_1} m^{-1}), (m^{z_1})\, {\vphantom{\big(}}^{\gamma_2} \big((m^{-1})^{z_1}\big), \ldots, (m^{z_1 \cdots z_{r-1}})\, {\vphantom{\big(}}^{\gamma_r} \big((m^{-1})^{z_1 \cdots z_{r-1}}\big) \big)$.  This action can be extended to the connected group $\L_I$. Consequently, if the order of $\L_I^{t(\mathbf{c})F}$ is invertible in $\Lambda$, then the cohomology of $ N \backslash \mathbf{S}_{\mathbf{I}, \mathbf{b},\mathbf{c}} $ can be represented by a complex with a trivial action of $N'$ and we have
\begin{equation}\label{cover} \Rgc( N \backslash \mathbf{S}_{\mathbf{I}, \mathbf{b},\mathbf{c}} , \Lambda) \,  \simeq \,  \Rgc( N \backslash \mathbf{S}_{\mathbf{I}, \mathbf{b},\mathbf{c}} /N',\Lambda) \, \simeq \, \Lambda \mathbf{S}_{\mathbf{I}, \mathbf{b},\mathbf{c}} / \mathbf{S}_{\mathbf{I}, \mathbf{b},\mathbf{c}}^\circ \, {\ol}_\Lambda\, \Rgc\big((\mathbb{G}_m)^d,\Lambda\big) \end{equation}
\noindent in $D^b(\Lambda \L_I^{t(\mathbf{b})F}/N \times \L_I^{t(\mathbf{c})F}/N'$-$\mathrm{mod})$.

\bk

\noindent \thesubsection.2. \textbf{The model $w = sw'$.}   We start with the case $r=1$, that is when $\mathbf{b} = \mathbf{w} \in \mathbf{W}$. Let $x$ be a $J$-reduced-$I$ element of $W$ and $s \in S$ be such that $w' = sw < w$ and $v=xsw\, {}^F x^{-1} \in W_J$. Recall from the previous section that if $s$ acts trivially on $\Phi_I$, then there exists normal subgroups $N$ of $\L_I^{\dot wF}$  and $N' $ of $\L_I^{\dot w' F}$ together with a canonical isomorphism $\L_I^{\dot wF} / N \simeq \L_I^{\dot w'F} / N'$. Using these small finite groups one can relate the cohomology of $U_J \backslash \widetilde \X_x $ to the cohomology of $\widetilde \X_{\L_J}(K_x,\dot vF)$:

\begin{prop}\label{ypartcase1}Let $w$ be an $I$-reduced element of $W$ such that ${}^{w F} I = I$. Assume that $w$ can be decomposed into $w=sw'$ such that 

\begin{itemize}

\item[$\mathrm{(i)}$] $v = x w' \, {}^F x^{-1} \in W_J$ and $\ell(v) = \ell(w')$

\item[$\mathrm{(ii)}$] $s \in S $ acts trivially on $\Phi_I$

\item[$\mathrm{(iii)}$] ${}^x (W_I s ) \cap W_J = 1$

\end{itemize}

 \noindent Then there exists a group isomorphism $\L_I^{\dot w F}/N \simeq \L_I^{\dot w' F} /N'$ such that, if the order of $\L_I^{\dot w' F}$ is invertible in $\Lambda$, we have 

\centers{$  \Rgc\big(U_J \backslash \widetilde \X_{x} \, /   N , \Lambda\big)\, \simeq \, \Rgc\big(\mathbb{G}_m \times \widetilde \X_{\L_{J}}(K_x,\dot v F), \Lambda\big) \, {\ol}_{\Lambda (\P_J \cap {}^x \L_I)^{\dot v F}} \, \Lambda  \L_I^{\dot w' F}/ N'$}

\noindent in $D^b(\Lambda L_J \times (  \L_I^{\dot w F} / N \rtimes \langle F \rangle)$-$\mathrm{mod})$.

\end{prop}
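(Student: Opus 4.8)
The plan is to run the argument of Proposition~\ref{ypartcase2} with one extra ingredient: the simple reflection $s$ that is dropped when passing from $w$ to $w'$ produces a one-dimensional torus whose torsor structure is exactly the covering $\mathbf{S}_{\mathbf{I},\mathbf{w},\mathbf{w}'}$ built at the beginning of this section, taken with $r=1$, $\mathbf{b}=\mathbf{w}$, $\mathbf{c}=\mathbf{w}'$ and $\gamma_1=s$ (so that $\mathbf{w}=\mathbf{s}\mathbf{w}'$ and the Tits section gives $\dot w=\dot s\,\dot{w'}$). Hypothesis~$\mathrm{(ii)}$ is precisely the requirement that $\gamma_1$ commute with $I$, so that $\mathbf{S}_{\mathbf{I},\mathbf{w},\mathbf{w}'}$ is defined, that $\mathrm{Im}\,\alpha_s^\vee$ is central in $\L_I$, and that we have at our disposal the group isomorphism $\L_I^{\dot wF}/N\simeq\mathbf{S}_{\mathbf{I},\mathbf{w},\mathbf{w}'}/\mathbf{S}_{\mathbf{I},\mathbf{w},\mathbf{w}'}^\circ\simeq\L_I^{\dot w'F}/N'$ asserted in the statement (the simply-connectedness of $[\G,\G]$ assumed throughout this section enters through the lattice description of $N$ and $N'$). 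Throughout I shall work with the explicit model of $\widetilde\Z_x$ from Section~\ref{sec3}, its $\L_J\cap{}^x\P_I$-action, its map to $\widetilde\X_x$, and the cartesian diagram~\ref{diagxz}.

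\emph{Step 1 (one Deodhar cell).} First I would prove, exactly as in Example~\hyperref[sec24]{\ref*{sec24}.1} and Remark~\ref{attentionlong}, that $x\P_I w\,{}^F\P_I\,{}^Fx^{-1}\cap\U_J v'$ is empty unless $v'$ lies in the same $W_{K_x}$-coset of $W_J$ as $v$. Hypothesis~$\mathrm{(i)}$ ($v=xw'\,{}^Fx^{-1}\in W_J$ with $\ell(v)=\ell(w')$) lets the emptiness argument of Example~\hyperref[sec24]{\ref*{sec24}.1} apply to the $w'$-part; the subexpressions of $yw$, $y\in W_I$, that retain the dropped reflection $s$ have product in ${}^x(W_I s)$ up to the relevant coset, which meets $W_J$ trivially by~$\mathrm{(iii)}$, so they contribute nothing. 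Hence $U_J\backslash\widetilde\Z_x$ is the single piece $\widetilde\Z_x^v$, and by Theorem~\ref{deodec} applied to the appropriate double Bruhat cell the fibre $(x\P_I w\,{}^F\P_I\,{}^Fx^{-1}v^{-1})\cap\U_J$ reduces to a single cell $\Upsilon_\gamma\simeq\mathbb{G}_m\times\mathbb{A}^a$ for some $a\ge0$; the $\mathbb{G}_m$-factor is the $\varpi_J$-image of a root subgroup that appears when the $\dot s$ in the defining relation of $\widetilde\Z_x$ is rewritten via $u_\alpha(\lambda)\,\dot s_\alpha=u_{-\alpha}(\lambda^{-1})\,\alpha^\vee(\lambda)\,u_\alpha(-\lambda^{-1})$, with $s=s_\alpha$.

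\emph{Steps 2--3 (the morphism and its cohomology).} Next I would normalise a pair $(p,m)\in\widetilde\Z_x$ as in the proof of Proposition~\ref{ypartcase2}: writing $p=ul\in\U_J\L_J$, use the $\L_J\cap{}^x\P_I$-action to arrange that $l$ defines a point of $\widetilde\X_{\L_J}(K_x,\dot vF)$ and that the $\U_J$-contribution to $p^{-1}\,{}^Fp$ sits in $\Upsilon_\gamma$; the $m$-analysis of Proposition~\ref{ypartcase2} then forces $m\,{}^{\dot vF}m^{-1}\in\U_J\cap{}^x\L_I$, except that rewriting $\dot s$ feeds in the coroot value $\alpha^\vee(\lambda)$, which by~$\mathrm{(ii)}$ is central in $\L_I$; the pair formed by the coordinate on the $\mathbb{G}_m$-factor of $\Upsilon_\gamma$ and the normalised $m$ then assembles into a point of $\widetilde{\mathbb{G}}_m:=\mathbf{S}_{\mathbf{I},\mathbf{w},\mathbf{w}'}$, carrying the two commuting torsor actions of $\L_I^{\dot wF}$ (from the covering $\widetilde\X_x\to\X_x$) and of $\L_I^{\dot w'F}$. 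This yields a surjective $P_J$-equivariant morphism $\widetilde\Z_x\longrightarrow\widetilde\X_{\L_J}(K_x,\dot vF)\times_{\P_J\cap({}^x\L_I)^{\dot vF}}\widetilde{\mathbb{G}}_m$ which, as in Proposition~\ref{ypartcase2}, factors through $U_J\backslash\widetilde\Z_x/(\L_J\cap{}^x\P_I)$ and has affine fibres coming from the $\mathbb{A}^a$-factor of $\Upsilon_\gamma$. Passing to cohomology, these affine fibres contribute only a shift and Tate twist which, under the normalisation of the statement, is cancelled by the shift relating $\Rgc(\widetilde\X_x,\Lambda)$ to $\Rgc(\widetilde\Z_x/(\L_J\cap\P_{K_x}),\Lambda)$ as in~\ref{xxeq}; quotienting further by $N\subset\L_I^{\dot wF}$ and applying~\ref{cover} --- which uses that $|\L_I^{\dot w'F}|$ is invertible in $\Lambda$ in order to trivialise the $N'$-action, and which gives $\Rgc(N\backslash\widetilde{\mathbb{G}}_m,\Lambda)\simeq\Lambda\mathbf{S}_{\mathbf{I},\mathbf{w},\mathbf{w}'}/\mathbf{S}_{\mathbf{I},\mathbf{w},\mathbf{w}'}^\circ\,{\ol}_\Lambda\,\Rgc(\mathbb{G}_m,\Lambda)$ --- converts the balanced product into the asserted
\centers{$\Rgc\big(U_J\backslash\widetilde\X_x/N,\Lambda\big)\ \simeq\ \Rgc\big(\mathbb{G}_m\times\widetilde\X_{\L_J}(K_x,\dot vF),\Lambda\big)\ {\ol}_{\Lambda(\P_J\cap{}^x\L_I)^{\dot vF}}\ \Lambda\,\L_I^{\dot w'F}/N'$}
in $D^b(\Lambda L_J\times(\L_I^{\dot wF}/N\rtimes\langle F\rangle)\text{-}\mathrm{mod})$; the $L_J$- and $F$-equivariance are inherited from the constructions and from~\ref{cover}.

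\emph{The main obstacle.} The substance of the argument is Step~2: matching the torsor data on the extra $\mathbb{G}_m$ with the covering $\mathbf{S}_{\mathbf{I},\mathbf{w},\mathbf{w}'}$, i.e.\ checking that this $\mathbb{G}_m$ genuinely carries both the $\L_I^{\dot wF}$- and the $\L_I^{\dot w'F}$-action in the compatible way encoded in $\mathbf{S}$. This is exactly the phenomenon flagged in the introduction and visible in Examples~\hyperref[sec24]{\ref*{sec24}.2}--\hyperref[sec24]{\ref*{sec24}.3}: there is no $L_J$-equivariant isomorphism at the level of the varieties $\X$, only after passing to the coverings $\widetilde\X$; and it is hypothesis~$\mathrm{(ii)}$ that makes it go through, by putting $\alpha_s^\vee$ in the centre of $\L_I$ so that the monodromy of the extra $\mathbb{G}_m$ commutes with $\L_I$. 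By comparison, discarding the competing Deodhar cells in Step~1 (via~$\mathrm{(iii)}$) and keeping track of the affine shifts in Step~3 are routine, but must be done with care.
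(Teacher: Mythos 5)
Your proposal is correct and follows essentially the same route as the paper: normalise representatives of the $\L_J\cap{}^x\P_I$-orbits so that $l$ gives a point of $\widetilde\X_{\L_J}(K_x,\dot vF)$, extract the coroot value $\alpha_s^\vee(\lambda)$ from the dropped reflection via the chosen $\mathrm{SL}_2$-relation (this is the content of the paper's two intermediate lemmas, where hypothesis (iii) forces $l_s\notin\T\dot s$ and hypothesis (ii) centralises $\mathrm{Im}\,\alpha_s^\vee$), assemble $(\lambda,m)$ into a point of ${}^n\mathbf{S}_{\mathbf{I},\mathbf{w},\mathbf{w}'}$, and conclude with the affine-fibre argument and \ref{cover}. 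The only cosmetic difference is that you organise the emptiness/single-cell step via the Deodhar decomposition, whereas the paper carries out the equivalent double-coset computation directly.
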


\begin{proof} Let $v = x w'\, {}^F x^{-1} \in W_J$ and let $n$ be a representative of $x$ in $N_\G(\T)$ such that  $\dot v = n \dot w' {}^F x^{-1}$. As is the proof of Proposition \ref{ypartcase2}, we shall work with $n$ instead of $\dot x$ and identify the variety $\widetilde \Z_x$ with

\centers{$ \big\{ (p,m) \in \P_J \times {}^x \L_I \, \big| \, (pm)^{-1}\, {}^F (pm) \in n \big(\U_I \dot w \, {}^F \U_I\big) \, {}^F n^{-1}\big\}.$}

\noindent In order to compute the quotient by $U_J$, we need a precise condition on $u \in \U_J$, $l\in \L_J$  and $m$ for  $(ul,m)$ to belong to this variety. We start by proving the following:

\mk

\begin{lem} Under the assumptions of Proposition \ref{ypartcase1}, if  $(p,m)$ belongs to $\widetilde \Z_x$ then $m {}^{\dot v F} m^{-1}$ lies in $\P_J$. 
\end{lem}

\begin{proof}[Proof of the Lemma] Since $sw'$ is $I$-reduced, $s\notin I$ and $\U_{I} \dot s \subset \U_{\alpha_s} \dot s \U_{I}$. Therefore we can write 

\centers{$\U_{I} \dot s \dot  w' \, {}^F \U_I \, \subset \, \U_{\alpha_s} \dot s \U_I  \dot w' \, {}^F \U_I. $ }

\noindent Note that this inclusion is actually an equality: indeed, $w'^{-1}(\alpha_s) \in \Phi^+$ since $sw' > w'$ and $w'^{-1}(\alpha_s) \notin {}^F \Phi_I^+$ otherwise $-\alpha_s = sw' (w'^{-1}(\alpha_s))$ would be in $\Phi_I^+$ by assumption on $sw'$.

\sk

The double coset $ \U_I  \dot w' \, {}^F \U_I$ can also be simplified: for $a \in W$ we denote $N(a)=\{\alpha\in \Phi^+ \, | \, a^{-1}(\alpha) \in \Phi^-\}$. If $\ell(ab) = \ell(a) +\ell(b)$ then $N(ab) = N(a) \amalg aN(b)$. Using assumption  $\mathrm{(i)}$ we can apply this to $xw' = v\, {}^F x$ in order to obtain

\centers{$ xN(w') \, = \, N(xw') \smallsetminus N(x) \, = \, \big(N(v)\amalg vN({}^F x)\big) \smallsetminus N(x).$}

\noindent Since $v \in W_J$ and $x$ is $J$-reduced, the sets $N(v)$ and $N(x)$ are disjoint. Moreover, $N(x)$ and $N({}^F x)$ have the same number of elements and hence $xN(w') = N(v)$. This proves that $\U \cap {}^{w'}\U^- = (\U \cap {}^v \U^-)^x \subset \L_J^x$. Since $w'F$ (like $wF$ by assumption $\mathrm{(ii)}$) normalises $I$ we deduce that 
\begin{equation}\label{uwu} \U_{I} \dot  w' \, {}^F \U_I  \, = \, (\U_I \cap \L_J^x) \, \dot  w' \, {}^F \big((\U_I \cap \L_J^x) \cdot (\U_I \cap (\U_J^-)^x) \cdot (\U_I \cap \U_J^x)\big).
\end{equation}
\noindent Now let $p \in \P_J$ be an element of $m n \U_I \dot s \dot w' \,{}^F \U_I {}^F (mn)^{-1}$. There exists $l_s \in \U_{\alpha_s} \dot s$ such that  $p \in m n l_s  \, \U_I \, \dot w' \,{}^F \U_I  {}^F (mn)^{-1} $. Since $\L_I$ normalises $\U_I$, we have $p \in (m {}^n l_s {}^{\dot v F} m^{-1})   \, n \U_I \, \dot w' \,{}^F \U_I  {}^F n^{-1}$. Now, by \ref{uwu}, the class $n \U_I \, \dot w' \,{}^F \U_I  {}^F n^{-1}$ is contained in $\P_J^- \cdot \P_J$ and therefore $m {}^n l_s {}^{\dot v F} m^{-1} \in \P_J \cdot \P_J^-$. We claim that this forces $l_s \notin \T \dot s$.  Otherwise ${}^x(\L_I \, s \, \L_I) = {}^x (\L_I s)$ would have a non-trivial intersection with $\P_J \cdot \P_J^-$, which is impossible by the Bruhat decomposition since ${}^x(W_I s)$ and $W_J$ are disjoint. 

\sk

Let $\T_s$ be the image of $\alpha_s^\vee$. By a simple calculation in $\G_s = \langle \U_{\alpha_s}, \U_{-\alpha_s}\rangle$, we deduce that $l_s \in \U_{-\alpha_s} \T_s \U_{\alpha_s}$.  Since $s$ acts trivially on $\Phi_I$, the group the group $\L_I$ normalises $\U_{\alpha_s}$ and $\T_s = \mathrm{Im} \, \alpha_s^\vee$. Moreover, ${}^x \U_{\alpha_s} \subset \U_J^-$ and therefore  $m {}^{\dot v F} m^{-1} \in \P_J \cdot \P_J^-$. If we decompose $\L_I$ into $(\B_I, \B_I^-)$-orbits, we have, as $x$ is reduced-$I$
 
 \centers{$ {}^x(\L_I) \cap (\P_J \cdot \P_J^-) = \displaystyle \coprod_{v' \in W_{K_x}} {}^x \B_I v' {}^x \B_I^- = ({}^x \L_I \cap \P_J)\cdot ({}^x \L_I \cap \U_J^-).$}

\noindent We want to prove that the contribution  of $\U_J^-$ on $m {}^{\dot v F} m^{-1}$ is trivial. Write $m {}^{\dot v F} m^{-1} = m'm''$ with $m' \in {}^x \L_I \cap \P_J$ and $m'' \in {}^x \L_I \cap \U_J^-$. Using \ref{uwu} and the fact that $l_s \in \U_{\alpha_s} \T_s \U_{-\alpha_s}$, we see that there exists $l' \in ({}^x \U_I \cap \L_J)\, \dot v \, {}^F ({}^x \U_I \cap \L_J)$ such that $p \in {}^x (\U_{-\alpha_s} \T_s) \, m'm'' \, \U_{x(\alpha_s)} \, l' \, {}^F \big(({}^x\U_I \cap \U_J^-) \cdot ({}^x \U_I \cap \U_J)\big)$. In this decomposition, $ {}^x (\U_{-\alpha_s} \T_s)$, $m'$, $l'$ and ${}^F({}^x \U_I \cap \U_J)\big)$ lie in $\P_J$, whereas $m''$, $\U_{x(\alpha_s)}$ and ${}^{l'F}({}^x \U_I \cap \U_J)$ lie in $\U_J^-$. Since $\P_J \cap \U_J^-$ is trivial, we deduce  that $m'' \in {}^{l' F} \big({}^x \U_I \cap \U_J^- ) \cdot  \U_{x(\alpha_s)} $.  Finally, since ${}^x \U_I \cap \L_J$ normalises ${}^x \P_I \cap \U_J^-$ and both $m''$ and $\U_{x(\alpha_s)}$ are contained in this group, we can conclude if we can show that  $ ({}^x \P_I \cap \U_J^-) \cap \, {}^{vF}( {}^x \U_I \cap \U_J^-) \subset {}^x \U_I$.  But ${}^x \P_I \cap {}^{vF} ({}^x \U_I) = {}^x (\P_I \cap {}^{w'F} \U_I) = {}^x (\U_I \cap {}^{w' F} \U_I) $ since $w'F$ normalises $\L_I$. \end{proof}

\begin{lem} Under the assumptions of Proposition \ref{ypartcase1}, let $m \in {}^x \L_I$ and $l \in ({}^x \U_I \cap \L_J) \, \dot v \, {}^F ({}^x \U_I \cap \L_J)$. For $u \in \U_J$, the element  $ul$ lies in $mn\,\U_{I} \dot s \dot w \, {}^F \U_I \, {}^F(mn)^{-1}$  if and only if there exist $\lambda \in \F^\times$, $m_1 \in {}^x \L_I \cap \U_J$ and $u_1 \in {}^F ({}^x \U_I \cap \U_J)$  such that

\begin{itemize}
  \item $m \,{}^{\dot v F} m^{-1}  = m_1 \cdot {}^n \alpha_s^\vee (\lambda)$
  \item $u = {}^{mn} u_{-\alpha_s}(\lambda) \cdot m_1 \cdot {}^{l} u_1$.

\end{itemize}
\end{lem}

\begin{proof}[Proof of the Lemma] We have already seen in the course of the proof of the previous lemma (see \ref{uwu}) that $ul$ can be written $ul = ({}^{mn} l_s) \, (m \, {}^{\dot v F} m^{-1})  \, l' \, u_2 \, u_1$ with $l_s \in u_{\alpha_s}(\F^\times) \, \dot s$, $l' \in  ({}^x \U_I \cap \L_J) \, \dot v \, {}^F ({}^x \U_I \cap \L_J)$, $u_2 \in {}^F({}^x \U_I \cap \U_J^-) $ and $u_1 \in {}^F({}^x \U_I \cap \U_J)$. By a simple calculation in $\G_s= \langle \U_{\alpha_s},\U_{-\alpha_s}\rangle$ we can decompose $l_s$ into $l_s = u_{-\alpha_s}(\lambda) \, \alpha_s^\vee(\lambda^{-1}) \, u_{\alpha_s}( -\lambda)$ where $\lambda \in \F^\times$ is uniquely determined (note that we have chosen specific $u_{\alpha}$'s in Section \ref{sec1}). By the previous Lemma $m \, {}^{\dot vF}m^{-1} = m_1 m_2$ with $m_1 \in {}^x \L_I \cap \U_J$ and $m_2 \in {}^x\L_I \cap \L_J$. From the expression of $ul$ we obtain

\centers{$ m_1^{-1} \, ({}^{mn} u_{-\alpha_s}(-\lambda))\,  u\, {}^l u_1^{-1} \, =\, {}^{m_1^{-1}mn} \big(\alpha_s^\vee(\lambda^{-1})u_{\alpha_s}( -\lambda )\big) m_2 l' l^{-1} {}^l u_2$.}

\noindent Since $\L_J$ (resp. $\L_I$) normalises $\U_J$ (resp. $\U_{\alpha_s}$ and $\U_{-\alpha_s}$) and $\U_{x(-\alpha_s)} \subset \U_J$, the left hand-side of this equality lies in $\U_J$ whereas the right-hand side lies in $\P_J^-$. Therefore it must be trivial and we obtain 

\begin{itemize}

\item $u = {}^{mn} u_{-\alpha_s}(\lambda)\, m_1 \, {}^l u_1$;

\item ${}^{m_1^{-1}mn} \big(\alpha_s^\vee(\lambda^{-1})\big) m_2 l' l^{-1} = 1$ and therefore $l = l'$ and $m_2 = {}^{m_1^{-1}mn} \big(\alpha_s^\vee(\lambda)\big) = {}^n \alpha_s^\vee (\lambda)$; 

\item ${}^{m_1^{-1}mn\alpha_s^\vee(\lambda^{-1})} \big(u_{\alpha_s}(-\lambda )\big) \, {}^{m_2l} u_2 = 1 $ and hence $u_2 = {}^{l^{-1}m_1^{-1} m n }  \big(u_{\alpha_s}(-\lambda )\big)$.
\end{itemize}

\noindent Conversaly, one can readily check that if these relations are satisfied then $ul \in mn\,\U_{I} \dot s \dot w \, {}^F \U_I \, {}^F(mn)^{-1}$. \end{proof}
\sk

As a consequence of the lemmas, we can proceed as in the proof of Proposition \ref{ypartcase2} to show that any element of $\widetilde \Z_x$ is in the $\P_J \cap {}^x \L_I$-orbit of a element $(p,m) = (ul,m)$ satisfying the following properties: 

\begin{itemize}

\item $l^{-1} \, {}^F l \in \, ({}^x \U_I \cap \L_J) \, \dot v \, {}^F ({}^x \U_I \cap \L_J)$
  \item $m \,{}^{\dot v F} m^{-1}  =  {}^n \alpha_s^\vee (\lambda)$
  \item $\big(u^{-1} \, {}^F u\big)^l = \big({}^{mn} u_{-\alpha_s}(\lambda)\big)\, \big({}^{l^{-1} \, {}^F l} u_1\big)$

\end{itemize}

\noindent for some $\lambda \in \F^\times$ and $u_1 \in {}^F( {}^x \U_I \cap \U_J)$ both uniquely determined. Moreover, the elements of this form in the class of $(p,m)$ form a single $(\P_J \cap {}^x \L_I)^{\dot v F}$-orbit.  

\sk

Recall from the previous section that to $w$ and $w'$ one can associated an algebraic group $\mathbf{S}_{\mathbf{I},\mathbf{w},\mathbf{w'}}$ above $\mathbb{G}_m$ defined by $\mathbf{S}_{\mathbf{I},\mathbf{w},\mathbf{w'}} = \{m \in \L_I \, | \, m^{-1} \,  {}^{\dot w F} m  \in \T_s \}$. Using the special representatives of $ \widetilde \Z_x/\P_J \cap {}^x \L_I $ mentioned above, we can define the following map

\centers{$ \Psi \, : \, [p;m] \in \widetilde \Z_x/\P_J \cap {}^x \L_I \, \longmapsto \, \big[l\, (\L_J\cap {}^x \U_I)\, ; m^{-1}\big] \, \in \widetilde \X(K_x,\dot v F) \times_{\P_J \cap ({}^x \L_I)^{\dot v F}} {}^n \mathbf{S}_{\mathbf{I},\mathbf{w},\mathbf{w'}}$}

\noindent where the action of $\P_J \cap ({}^x \L_I)^{\dot v F}$ on $\widetilde \X_{\L_J}(K_x,\dot v F)$ is just the inflation of the action of $\L_{K_x}^{\dot vF} = \L_J \cap ({}^x \L_I)^{\dot v F}$. It is clearly surjective and equivariant for the actions of $P_J$ and $ {}^n (\L_I^{\dot w F})$. The quotient by $U_J$ (which acts trivially on $\widetilde \X_{\L_J}(K_x,\dot v F)$) gives rise to a surjective $L_J \times {}^n(\L_I^{\dot w F})$-equivariant morphism
\begin{equation} \label{morphaff} U_J \backslash \widetilde \Z_x/\P_J \cap {}^x \L_I \, \longrightarrow \, \widetilde \X(K_x,\dot v F) \times_{\P_J \cap ({}^x \L_I)^{\dot v F}} {}^n \mathbf{S}_{\mathbf{I},\mathbf{w},\mathbf{w'}}.
\end{equation}
\noindent Furthermore, any element $[U_J ull';m]$ in the fiber of $[ l\, (\L_J \cap {}^x \U_I) ;m^{-1}]$ is uni\-quely determined by an element $l' \in ({}^x \U_I \cap \L_J)$ and $u^{-1} {}^Fu$. Since the latter is determined by $u_1 \in {}^F ({}^x \U_I \cap \U_J)$, we deduce that the fibers are affine spaces of dimension $\dim ({}^x \U_I \cap \P_J)$. By comparing the dimensions,  we obtain the following isomorphism in $D^b(L_J $-$\mathrm{mod}$-${}^n(\L_I^{\dot w F}))$:

\centers{$\Rgc(U_J \backslash \widetilde \X_x, \Lambda) \, \simeq \, \Rgc\big(\widetilde \X(K_x,\dot v F) , \Lambda\big) \, {\ol}_{\Lambda \P_J \cap ({}^x \L_I)^{\dot v F}} \, \Rgc({}^n \mathbf{S}_{\mathbf{I},\mathbf{w},\mathbf{w'}}, \Lambda).$}

\noindent  and we conclude using \ref{cover}, which gives the cohomology of $N \backslash \mathbf{S}_{\mathbf{I},\mathbf{w},\mathbf{w'}}$ with the action of  $\L_I^{\dot w F}/ N$ and $\L_I^{\dot w' F} / N'$. \end{proof}

\begin{rmk} In many cases we will use this Lemma under the assumption that either $I = \emptyset$ or  $x = w_0 w_J$. This extra condition makes the previous proof much simpler.
\end{rmk}

\begin{rmk} When $[\G,\G]$ is not simply connected, the coroot $\alpha_s^\vee$ might not be injective. In that case, the fibers of the morphism \ref{morphaff} are not necessarily affine spaces. To obtain an analogous statement, we need to change slightly the definition of $\mathbf{S}_{\mathbf{I},\mathbf{w},\mathbf{w'}}$ and consider instead $\big\{ (m, \lambda) \in \L_I \times \G_m \, | \, m^{-1} \, {}^{\dot w F} m = \alpha_s^\vee (\lambda)\big\}$.
\end{rmk}

\sk

\noindent \thesubsection.3. \textbf{The main result.} More generally, one can combine Proposition \ref{ypartcase2} and \ref{ypartcase1} in order to obtain the following result for elements in the Braid monoid:

\begin{thm}\label{mainthm}Let $\mathbf{I} \mathop{\longrightarrow}\limits^\mathbf{b} {}^F \mathbf{I}$ decomposed as $\mathbf{I} = \mathbf{I}_1 \mathop{\longrightarrow}\limits^{\mathbf{w}_1}$ $ \mathbf{I}_2  \mathop{\longrightarrow}\limits^{\mathbf{w}_2} \cdots  \mathop{\longrightarrow}\limits^{\mathbf{w}_r} \mathbf{I}_{r+1}= {}^F \mathbf{I}$ and $\mathbf{c} = \mathbf{z}_1 \cdots \mathbf{z}_r$ obtained by minimal degenrations of the $w_i$'s. More precisely, we assume that $z_i = \gamma_i w_i$ with $z_i \leq w_i$ and $\gamma_i \in S\cup \{1\}$. Let $\mathbf{x}=(x_1,\ldots,x_r)$ be a $r$-tuple of $J$-reduced-$I_i$ elements of $W$ of same length and set $x_{r+1} = {}^F x_1$. We assume that

\begin{itemize}

\item if $\gamma_i = 1$ then $v_i = x_i w_i  x_{i+1}^{-1}$ is a $K_{x_i}$-reduced element of $W_J$;

\item if $\gamma_i \in S$ then the following properties are satisfied: 

\begin{itemize}

\item[$\mathrm{(i)}$] \vskip -3mm $v_{j} = x_{i} z_{i} x_{i+1}^{-1} \in W_J$ and $\ell(v_{i}) = \ell(z_{i})$

\item[$\mathrm{(ii)}$] $\gamma_i$ acts trivially on $\Phi_{I_{j}}$

\item[$\mathrm{(iii)}$] ${}^{x_{i}} (W_{I_i} s ) \cap W_J = 1$.

\end{itemize}
\end{itemize}

\noindent Let us denote by

\begin{itemize}
\item $e = \sum \dim (\U_J^{x_i} \cap {}^{z_i} \U \cap \U^-)$;

\item $d = \#\{i =1,\ldots,r \, | \, \gamma_i \in S\} = \dim \mathbf{S}_{\mathbf{I},\mathbf{b},\mathbf{c}}$;

\item $\mathbf{v} = \mathbf{v_1} \cdots \mathbf{v_r} \in B_{W_J}^+ $;

\item $N$ (resp. $N'$)  the stabiliser of $\mathbf{S}_{\mathbf{I},\mathbf{b},\mathbf{c}}^\circ$ in $\L_I^{t(\mathbf{b})F}$ (resp. $\L_I^{t(\mathbf{c})F}$).

\end{itemize}

\noindent  If the order of $\L_I^{t(\mathbf{c})F}$ is invertible in $\Lambda$, then there exists a natural isomorphism $\L_I^{t(\mathbf{b})F}/N \simeq \L_I^{t(\mathbf{c})F} / N'$ such that the cohomology of the piece $\widetilde \X_\mathbf{x}$ of the Deligne-Lusztig variety $\widetilde \X(\mathbf{I},\mathbf{b}F)$ satisfies

\centers{$  \Rgc\big(U_J \backslash \widetilde \X_{\mathbf{x}} \, /   N , \Lambda\big)[2e](-e)\, \simeq \, \Rgc\big((\mathbb{G}_m)^d \times \widetilde \X_{\L_{J}}(\mathbf{K}_{x_1},\mathbf{v} F)\big) \, {\ol}_{\Lambda (\P_J \cap {}^{x_1} \L_{I})^{t(\mathbf{v}) F}} \, \Lambda  \L_I^{t(\mathbf{c})F} / N'$}

\noindent in $D^b(\Lambda L_J \times (  \L_I^{t(\mathbf{b}) F} / N \rtimes \langle F \rangle)$-$\mathrm{mod})$.

\end{thm}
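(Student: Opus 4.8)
The plan is to reduce the general braid-monoid statement of Theorem~\ref{mainthm} to the two model situations treated in Propositions~\ref{ypartcase2} and~\ref{ypartcase1}, handling the decomposition $\mathbf{c} = \mathbf{z}_1 \cdots \mathbf{z}_r$ one factor at a time. First I would use the description of $\widetilde{\Z}_{\mathbf{x}}$ given at the end of Section~\ref{sec21} (in its $r$-tuple form): after replacing each $\dot x_i$ by a suitable $n_i \in N_\G(\T)$ via Lang's theorem (so that $\dot v_i = n_i \dot z_i\, {}^F n_{i+1}^{-1}$, as in the proofs of the two propositions), we may work with the $n_i$'s. The point is that the defining relations $p_i^{-1} p_{i+1} \in x_i \P_{I_i} w_i \P_{I_{i+1}} x_{i+1}^{-1}$ split into ``Case 1 factors'' (where $\gamma_i = 1$, governed by the assumption that $v_i$ is a $K_{x_i}$-reduced element of $W_J$) and ``Case 2 factors'' (where $\gamma_i \in S$, governed by conditions (i)--(iii)). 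For each index $i$ I would establish, exactly as in the single-step lemmas, that the $\U_J$-component and the ${}^{x_i}\L_{I_i}$-component of $p_i^{-1}{}^F p_{i+1}$ are constrained: in the $\gamma_i=1$ case the fibre over $\X_{\L_J}(K_{x_i},v_iF)$ is an affine space of dimension $\dim(\U_J \cap {}^{x_i}\U) + \dim(\U_J^{x_i} \cap {}^{z_i}\U \cap \U^-)$, and in the $\gamma_i \in S$ case the ``extra'' scalar $\lambda \in \F^\times$ appears, contributing one factor to $\mathbf{S}_{\mathbf{I},\mathbf{b},\mathbf{c}}$.

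Next I would assemble these local analyses into a single surjective $L_J \times {}^{n_1}(\L_I^{t(\mathbf{b})F})$-equivariant morphism
\[
U_J \backslash \widetilde{\Z}_{\mathbf{x}} \big/ \textstyle\prod(\L_J \cap {}^{x_i}\P_{I_i}) \;\longrightarrow\; \widetilde{\X}_{\L_J}(\mathbf{K}_{x_1},\mathbf{v}F) \times_{\P_J \cap ({}^{x_1}\L_I)^{t(\mathbf{v})F}} {}^{n_1}\mathbf{S}_{\mathbf{I},\mathbf{b},\mathbf{c}}
\]
by sending a (carefully normalized) representative $(p_1,\dots,p_r;m_1,\dots,m_r)$ to the tuple $(l_i)$ it produces on $\L_J$ together with the collection of $m_i\,{}^{\dot v_i F}m_{i+1}^{-1}$, which by the local analysis lies componentwise in the appropriate $\T_{s}$-coset inside $\U_J \cap {}^{x_i}\L_{I_i}$, hence defines a point of $\mathbf{S}_{\mathbf{I},\mathbf{b},\mathbf{c}}$ (using the identification $\L_I^{t(\mathbf{c})F} \simeq {\L'}^{\mathbf{c}F'}$ from Section~\thesection.1). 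One checks this factors through the quotient by $U_J$ and by $\prod(\L_J \cap {}^{x_i}\P_{I_i})$, exactly as in the single-step proofs. By comparing dimensions, the fibres of the induced map on the honest quotient varieties (not the fibre products) are affine spaces of total dimension $\sum \dim(\U_J \cap {}^{x_i}\U)$, whence a shift-and-twist isomorphism in $D^b$. This gives
\[
\Rgc(U_J \backslash \widetilde{\X}_{\mathbf{x}},\Lambda)[2e](-e) \;\simeq\; \Rgc\big(\widetilde{\X}_{\L_J}(\mathbf{K}_{x_1},\mathbf{v}F),\Lambda\big) \,{\ol}_{\Lambda\P_J \cap ({}^{x_1}\L_I)^{t(\mathbf{v})F}}\, \Rgc({}^{n_1}\mathbf{S}_{\mathbf{I},\mathbf{b},\mathbf{c}},\Lambda),
\]
the shift $2e$ absorbing both the $\dim(\U_J \cap {}^{x_i}\U)$ from the fibres and the $\dim(\U_J^{x_i} \cap {}^{z_i}\U \cap \U^-)$ coming from \ref{xxeq} applied to $\widetilde{\X}_{\mathbf{x}}$ versus $\widetilde{\Z}_{\mathbf{x}}$. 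Finally I would quotient by $N$ and invoke the computation \ref{cover} of $\Rgc(N\backslash\mathbf{S}_{\mathbf{I},\mathbf{b},\mathbf{c}},\Lambda)$ — this is where the hypothesis that $|\L_I^{t(\mathbf{c})F}|$ is invertible in $\Lambda$ enters, producing the free action of $N'$ and the factor $\Rgc((\mathbb{G}_m)^d,\Lambda)$ together with the canonical isomorphism $\L_I^{t(\mathbf{b})F}/N \simeq \L_I^{t(\mathbf{c})F}/N'$.

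The main obstacle will be the bookkeeping in the multi-step ``minimal degeneration'' part: when several $\gamma_i$ are nontrivial and interleaved with Case-1 steps, one must verify that the normalizations performed at step $i$ (pulling the scalar $\lambda_i$ out of $\U_{\alpha_{\gamma_i}}\dot\gamma_i$ and moving the resulting $\U_{x_i(-\alpha_{\gamma_i})} \subset \U_J^-$ component across) do not interfere with the constraints at steps $j \neq i$, and that the resulting map really lands in $\mathbf{S}_{\mathbf{I},\mathbf{b},\mathbf{c}}$ rather than some larger variety — this uses crucially that each $\gamma_i$ commutes with $I_i$ (assumption (ii)), so that $\mathrm{Im}\,\alpha_{\gamma_i}^\vee$ is central in $\L_{I_i}$ and the $m_i\,{}^{\dot v_iF}m_{i+1}^{-1}$ genuinely commute past the relevant unipotent factors. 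The condition $\ell(v_i) = \ell(z_i)$ (assumption (i)) and the disjointness ${}^{x_i}(W_{I_i}s) \cap W_J = 1$ (assumption (iii)) are exactly what is needed to run the Curtis--Deodhar / Bruhat arguments of Section~\ref{sec23} uniformly across all steps, so conceptually nothing new is required beyond Propositions~\ref{ypartcase2} and~\ref{ypartcase1}; the work is purely in checking that the local pictures glue.
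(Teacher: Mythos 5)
Your proposal follows essentially the same route as the paper's own (sketched) proof: lift $\widetilde\X_{\mathbf{x}}$ to $\widetilde\Z_{\mathbf{x}}$, choose normalized representatives step by step via the single-factor lemmas of Propositions \ref{ypartcase2} and \ref{ypartcase1} (and Remark \ref{attentionlong}), assemble the equivariant morphism onto $\widetilde\X_{\L_J}(\mathbf{K}_{x_1},\mathbf{v}F)\times_{\P_J\cap({}^{x_1}\L_I)^{t(\mathbf{v})F}}{}^{n_1}\mathbf{S}_{\mathbf{I},\mathbf{b},\mathbf{c}}$ with affine fibers, and conclude with \ref{cover}. The approach and all key ingredients match; only your attribution of which parts of the dimension count are absorbed into the shift $[2e](-e)$ versus into \ref{xxeq} is stated a little loosely, but this does not affect the argument.
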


\begin{proof}[Sketch of proof] Recall that the piece $\widetilde \X_\mathbf{x}$ can be lifted up to a variety $\widetilde \Z_\mathbf{x}$ defined as the set of $2r$-tuples $(\mathbf{p},\mathbf{m}) = (p_1,\ldots,p_r,m_1,\ldots,m_r) \in (\P_J)^r \times {}^{x_1} \L_{I_1} \times \cdots \times {}^{x_r} \L_{I_r}$ such that 

\centers{$(p_im_i)^{-1}\, p_{i+1}m_{i+1}\  \in \ \dot x_{i} \big(\U_{I_i} \dot w_i \U_{i+1}\big)\dot x_{i+1}^{-1}$}

\leftcenters{and}{$(p_rm_r)^{-1}\, {}^F (p_1m_1)\  \in \ \dot x_r \big(\U_{I_r} \dot w_r \, {}^F \U_{I_1}\big) \, {}^F \dot x_1^{-1}.$}

\noindent As in the proofs of Proposition \ref{ypartcase2} and \ref{ypartcase1} (see also Remark \ref{attentionlong}), we can find good representatives in the $\prod \P_J \cap {}^{x_i}\L_{I_i}$-orbit of $(\mathbf{p},\mathbf{m})$, giving rise to a morphism

\centers{$ \Psi \, : \, \widetilde \Z_\mathbf{x} \, \longrightarrow \, \widetilde \X_{\L_{J}}(\mathbf{K}_{x_1},\mathbf{v} F) \times_{ (\P_J \cap {}^{x_1} \L_{I})^{t(\mathbf{v}) F}} \, \mathbf{S}_{\mathbf{I},\mathbf{b},\mathbf{c}}$}

\noindent which will factor via the quotient of $\widetilde \Z_\mathbf{x}$ by $U_J$ and $\prod \P_J \cap {}^{x_i}\L_{I_i}$ into a morphism whose fibers are affine spaces. Note that one can find $n_1 \in N_\G(\T)$ such that $n_1 t(\mathbf{c}) \, {}^F n_1^{-1} = t(\mathbf{v})$. The action of $({}^{x_1}\L_I)^{t(\mathbf{v})F}$ on $\mathbf{S}_{\mathbf{I},\mathbf{b},\mathbf{c}}$ is then given by the right action of $\L_I^{t(\mathbf{c})F} = \big(({}^{x_1}\L_I)^{t(\mathbf{v})F}\big)^{n_1}$ on $ \mathbf{S}_{\mathbf{I},\mathbf{b},\mathbf{c}}$. We conclude using  \ref{cover}.\end{proof}

\begin{rmk}\label{rmkunip} By definition, any unipotent character of $G$ appears in the cohomology of some Deligne-Lusztig variety. If $H$ is a normal subgroup of $\G$ contained in $T$, then $H$ acts trivially on $\G/\B$ and therefore any unipotent character of $G$ is trivial on $H$. This applies in particular to the subgroups $N$ and $N'$ of $\L_I^{t(\mathbf{b})F}$ and $\L_I^{t(\mathbf{c})F}$ so that they have the same unipotent characters. Now, the group $(\P_J)^{x_1} \cap  \L_I$ is a parabolic subgroup of $\L_I$, stable by $t(\mathbf{c}) F$, and it has  $\L_J^{x_1} \cap \L_I = (\L_{K_{x_1}})^{x_1}$ as a rational Levi complement. Therefore any unipotent character $\chi$ of $\L_I^{t(\mathbf{c})F}$ (or equivalently of $\L_I^{t(\mathbf{b})F}$)  has a Harish-Chandra restriction ${}^* \mathrm{R}_{K_{x_1}}^I \, \chi$ to $\L_{K_{x_1}}^{t(\mathbf{v})F}$ (after a suitable conjugation). With this notation, we obtain

\centers{$  \Rgc\big(\widetilde \X_{\mathbf{x}}, \qlb\big)_\chi^{U_J}\, \simeq \, \Rgc\big((\mathbb{G}_m)^d \times \widetilde \X_{\L_{J}}(\mathbf{K}_{x_1},\mathbf{v} F), \qlb\big)_{{}^*\mathrm{R}_{K_{x_1}}^I \, \chi} [-2r](r).$}

\noindent In particular, if $\chi$ is the trivial character then

\centers{$  \Rgc\big( \X_{\mathbf{x}}, \qlb\big)^{U_J}\, \simeq \, \Rgc\big((\mathbb{G}_m)^d \times  \X_{\L_{J}}(\mathbf{K}_{x_1},\mathbf{v} F), \qlb\big)[-2r](r)$}

\noindent as expected.
\end{rmk}

\subsection{Examples\label{sec33}}

We conclude by showing how Proposition \ref{ypartcase1} can solve the problems encountered in Section \ref{sec24}. As a new application, we determine the contribution of the principal series to the cohomology of a parabolic Deligne-Lusztig variety for a group of type $B_n$. Many other cases will be studied in a subsequent paper. 

\mk

\noindent \thesubsection.1. \textbf{$n$-th roots of $\boldsymbol \pi$ for groups of type $A_n$.} Recall from  \hyperref[sec24]{\ref*{sec24}.3} that for $w = t_1 t_2 \cdots t_n t_{n-1} t_n$ one could decompose the variety $ \X(w)$ into two pieces $\X_{x_n}$ and $\X_{x_1}$ with $x_n = t_n$ and $x_1 = t_n \cdots t_1$ .  However, one could not direcly express the cohomology of the latter. Since $x (t_1 w)x^{-1} = t_1 \cdots t_{n-2} t_{n-1} t_{n-2} \in W_J$ one can now apply Proposition \ref{ypartcase1} to obtain 

\centers{$  \Rgc\big(U_J \backslash  \X_{x_1} , \qlb\big)\, \simeq \, \Rgc\big(\mathbb{G}_m \times  \X_{\L_{J}}(t_1 \cdots t_{n-2} t_{n-1} t_{n-2}), \qlb\big)$}

\noindent in $D^b(\qlb \L_J \times \langle F \rangle$-$\mathrm{mod})$.

\mk

 \noindent \thesubsection.2. \textbf{A new example in type $B_n$.} Let $\G$ be a group of type $B_n$. We denote by $t_1, \ldots, t_n$ the simple reflections of $W$, with the convention that $t_2, \ldots,t_n$ generate a parabolic subgroup of type $A_{n-1}$.  We will restrict our attention to the principal series of $\mathrm{Irr}\ G$, which is parametrised by the representations of the Weyl group $W$. Following \cite{Car}, we will denote by ${[\lambda;\mu]}$ the unipotent character associated to the bipartition $(\lambda,\mu)$ of $n$, with the convention that $\mathrm{Id}_G = [n;-]$ and $\mathrm{St}_G = {[-;1^n]}$.
 
 \sk

 For $n \geq 2$, we consider $w_n = t_n \cdots t_2t_1 t_2 $. It is an $I$-reduced element which normalises $I$ for $I = \{t_1\}$. Then one can use the previous method to determine the principal part of the cohomology of $\X(I,w_n)$, with coefficients in the trivial local system $\qlb$ or in the local system $\mathbf{St}$ associated to the Steinberg representation of $\L_I^{w_nF}$:
 
 \begin{prop} For $n \geq 2$, the contribution of the principal series to the cohomology of $\X(I,w_n)$ with coefficients in  $\qlb$ or $\mathbf{St}$, together with the eigenvalues of $F$, is given by

 \centers{$   \Hc^{n+k}\big(\X(I,w_n), \qlb\big)_{\mathrm{pr}} \, = \, \left\{ \begin{array}{l}  q^{k}\, {[k-1;21^{n-k-1}]} \ \, \text{if } 1 \leq k \leq n-1 \\[4pt]
 q^{n+1}\,  {[n;-]} \ \ \   \text{if } k=n+2 \\[4pt]
 0  \ \  \text{otherwise} \end{array} \right.
$}  

  \leftcenters{and}{$   \Hc^{n+k}\big(\X(I,w_n), \mathbf{St}\big)_{\mathrm{pr}} \, = \, \left\{ \begin{array}{l}  {[-;1^n]} \ \ \   \text{if } k=1 \\[4pt]
  q^{k}\, [(k-1,1);1^{n-k}] \ \, \text{if } 2 \leq k \leq n \\[4pt]
  0  \ \  \text{otherwise.} \end{array} \right.
$}   
 
 \end{prop}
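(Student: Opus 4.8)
The plan is to run the machinery of Sections~\ref{sec2} and~\ref{sec3} on the covering $\widetilde\X(I,\dot w_n F)$ with $J$ the (automatically $F$-stable) subset $\{t_2,\dots,t_n\}$, so that $\L_J$ is of type $A_{n-1}$; the coefficients $\qlb$ and $\mathbf{St}$ are the two unipotent local systems on $\widetilde\X(I,\dot w_n F)$, coming from the trivial and the Steinberg character of the rank-one group $\L_I^{\dot w_n F}$. First I would write $\X(I,w_n)=\coprod_x\X_x$ over $x\in[W_J\backslash W/W_I]$ and apply the combinatorial test of Remark~\ref{rmktest}: for $w_n=t_n\cdots t_2 t_1 t_2$ only a combinatorially explicit (small) family of double cosets survives, and for each surviving $x$ one records $K_x=J\cap{}^x\Phi_I$ and the element $v_x\in W_J$ governing the piece, together with whether $x w_n{}^Fx^{-1}\in W_J$ (so that Proposition~\ref{ypartcase2} applies) or $w_n=s w'$ with $s\in S$ and $x w'{}^Fx^{-1}\in W_J$ under hypotheses (i)--(iii) of Proposition~\ref{ypartcase1} (so that Proposition~\ref{ypartcase1} applies).

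Next, for each non-empty piece I would feed $(x,v_x,K_x)$ into Proposition~\ref{ypartcase2} or Proposition~\ref{ypartcase1} --- equivalently the $r=1$ case of Theorem~\ref{mainthm} --- together with Remark~\ref{rmkunip} to descend to a fixed unipotent local system of $\L_I^{\dot w_n F}$ (the trivial one, resp.\ $\mathbf{St}$). This identifies $\Rgc(U_J\backslash\widetilde\X_x,\Lambda)$, for both versions, with the cohomology of a variety of the form $(\mathbb{G}_m)^{d_x}\times\widetilde\X_{\L_J}(K_x,\dot v_x F)$, $d_x\in\{0,1\}$, up to shift and Tate twist, the local system on $\widetilde\X_{\L_J}(K_x,\dot v_x F)$ being the Harish-Chandra restriction of the one on $\widetilde\X(I,\dot w_n F)$ from $\L_I^{\dot w_n F}$ to $\L_{K_x}^{\dot v_x F}$. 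The varieties $\widetilde\X_{\L_J}(K_x,\dot v_x F)$ occurring are Coxeter-type varieties of $\mathrm{GL}$-Levi subgroups, whose cohomology --- the module in each degree and the eigenvalue of $F$ --- is known from Lusztig's computations \cite{Lu} and from the discussion recalled in Section~\ref{sec24}; the $\mathbb{G}_m$ factors merely tensor in the elementary complex $\Rgc(\mathbb{G}_m,\qlb)$.

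Then I would reassemble. The stratification of $U_J\backslash\X(I,w_n)$ by the pieces $\mathfrak X_x^{v_x}$ yields a spectral sequence abutting to $\Hc^*(\X(I,w_n),\Lambda)^{U_J}$ (and the analogue for $\mathbf{St}$); it should degenerate because the pieces are assembled from $\mathbb{G}_m$'s and Coxeter varieties, whose cohomology is pure, so the $F$-eigenvalues on terms sitting in consecutive degrees are distinct and no differential can survive. What one obtains at this point is the Harish-Chandra restriction ${}^*\mathrm{R}_{\L_J}^{\G}$ of the principal-series part of $\Hc^*(\X(I,w_n))$, graded by cohomological degree and by $F$-eigenvalue, for both local systems. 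Finally I would invert this: writing $\Hc^{n+k}(\X(I,w_n),\qlb)_{\mathrm{pr}}=\sum_{(\lambda,\mu)}a^k_{\lambda,\mu}\,{[\lambda;\mu]}$ and applying ${}^*\mathrm{R}_{\L_J}^{\G}$ --- whose effect on unipotent characters is given by the branching of $\Irr(W(B_n))$ to $\Irr(\mathfrak S_n)$, i.e.\ by Littlewood--Richardson coefficients --- one gets a linear system for the $a^k_{\lambda,\mu}$; the residual ambiguity (the kernel of this restriction map contains the $[\lambda;\mu]-[\mu;\lambda]$, and more) is then resolved using the $F$-eigenvalues, the cohomological degree (bounded below in terms of the $a$-invariant of the character), positivity of the $a^k_{\lambda,\mu}$, and the Alvis--Curtis relation tying the $\qlb$ and the $\mathbf{St}$ answers together.

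The principal obstacle is exactly this last reconstruction: because ${}^*\mathrm{R}_{\L_J}^{\G}$ is far from injective on principal-series unipotent characters of $B_n$, identifying which character --- and in particular choosing between $[\lambda;\mu]$ and $[\mu;\lambda]$ --- sits in each graded piece requires a careful combination of the eigenvalue, degree, positivity and duality constraints above, and checking that the resulting system has a unique nonnegative solution. A secondary, more routine difficulty lies in Step~1: enumerating the non-empty pieces of $\X(I,w_n)$, and, for the pieces handled by Proposition~\ref{ypartcase1}, verifying the hypotheses on $s$ and $x$ (notably ${}^{x}(W_{I}s)\cap W_J=1$) for the explicit element $w_n=t_n\cdots t_2 t_1 t_2$.
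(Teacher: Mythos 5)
Your overall strategy (stratify, apply Propositions \ref{ypartcase2} and \ref{ypartcase1} piecewise, reassemble) is the right one, but two choices take you off course and leave a genuine gap. First, the choice of $J$: the paper takes $J=\{t_1,\ldots,t_{n-1}\}$, a subsystem of type $B_{n-1}$, not $\{t_2,\ldots,t_n\}$ of type $A_{n-1}$. With the paper's choice only two pieces survive: $\X_{x_2}$ with $x_2=t_n\cdots t_2$, handled by Proposition \ref{ypartcase2} and yielding the Coxeter variety $\X_{\L_J}(t_{n-1}\cdots t_2t_1)$ of $\L_J$ (known by Lusztig), and $\X_{y_n}$ with $y_n=w_0w_J$, handled by Proposition \ref{ypartcase1} and yielding $\mathbb{G}_m\times\X_{\L_J}(I,w_{n-1})$ --- i.e.\ \emph{the same variety one rank down}. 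This sets up an induction on $n$ (with base case $n=2$ settled by citing Digne--Michel and Digne--Michel--Rouquier; your proposal has no base input of this kind), and it is precisely what makes the reconstruction step tractable: one compares $\Hc^\bullet(U_J\backslash\X(I,w_n))$ with Harish-Chandra restrictions to a $B_{n-1}$-Levi, where a principal-series character other than $\mathrm{Id}$ or $\mathrm{St}$ cannot occur in both pieces, and the ambiguity is resolved by the long exact sequence, the bound of \cite[Corollary 8.28(v)]{DM3}, and the knowledge from \cite{BMM} of which characters lie in the $\Phi_{2n-2}$-blocks of $\mathrm{Id}_G$ and $\mathrm{St}_G$ and genuinely occur (they survive in the alternating sum).

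Second, and more seriously, your final ``inversion'' step is not established and is doubtful as stated. Restriction to a type $A_{n-1}$ Levi acts on $[\lambda;\mu]$ through Littlewood--Richardson coefficients $c^\nu_{\lambda\mu}$, which are symmetric in $\lambda$ and $\mu$, so $[\lambda;\mu]$ and $[\mu;\lambda]$ are literally indistinguishable from this data; the Frobenius eigenvalues in the asserted answer are plain powers of $q$ and do not separate them either, and you give no argument that degree bounds, positivity and Alvis--Curtis duality force a unique nonnegative solution of your linear system. You correctly flag this as ``the principal obstacle,'' but flagging it does not close it: as written the argument does not determine the right-hand sides of the Proposition. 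You would either need to prove uniqueness of the solution of your system (which I do not believe holds without further input such as the block-theoretic constraints the paper imports from \cite{BMM}), or switch to the paper's choice of $J$ and run the induction on $n$.
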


\begin{proof} We proceed by induction. When $n = 2$, \cite[Corollary 8.27]{DM3} applied to $v=1$ forces  $\Hc^\bullet(\X(I,w_2),\qlb)$ and $\Hc^\bullet(\X(I,w_2),\mathbf{St})$ to be $G\times \langle F \rangle$-submodules of $\Hc^\bullet(\X(w_2),\qlb)$. By \cite[Theorem 4.3.4]{DMR}, the latter is multiplicity-free and hence the theorem can be deduced from \cite[Corollary 8.41]{DM3}.

\sk

Assume that $n > 2$ and let $J = \{t_1, \ldots,t_{n-1}\}$. We want to compute the cohomology of $U_J \backslash \X(I,w_n)$. We first observe that any $J$-reduced-$I$ element of $W$ is either $x_i = t_n \cdots t_i$  or $y_i = t_n \cdots t_2 t_1 t_2 \cdots t_i$ for $i>1$. We claim that $\X_{x_i}$ and $\X_{y_j}$ are empty if $i \neq 2$ and $j \neq n$. Indeed, if $i > 2$ then 

\centers{$ W_J^{x_i} = W_J^{y_{i-1}} = \, \langle t_1 ,t_2, \ldots, t_{i-2}, t_{i-1} t_i t_{i-1}, t_{i+1}, \ldots, t_n\rangle.$}

\noindent If $\gamma$ is an $x_i$-distinguished subexpression of an element of $W_I w$ (that is, either $t_1 w$ or $w$) then the product of $\gamma$ is never in $W_J^{x_i}$. Otherwise $\gamma$ would contain neither $t_{i-1}$ nor $t_i$ which is impossible since $\gamma$ is distinguished. The case of $y_{i-1}$ is similar. We deduce that $\X(I,w_n) = \X_{x_2} \amalg \X_{y_n}$. Let us examine each of these two varieties: 

\begin{itemize}

\item we have $x_2 w_n x_2^{-1} = t_{n-1} \cdots t_2 t_1 \in W_J$ and $K_{x_2} = J \cap {}^{x_2} (\Phi_I) = \emptyset$. We can therefore apply Proposition \ref{ypartcase2} and Remark \ref{rmkunip} to obtain

\centers[1]{$ \Rgc(U_J \backslash \X_{x_2},\qlb) \, \simeq \, \Rgc(\X_{\L_J}( t_{n-1} \cdots t_2 t_1),\qlb)[-1]$ }

\leftcenters[1]{and}{$ \Rgc(U_J \backslash \X_{x_2},\mathbf{St}) \, \simeq \, \Rgc(\X_{\L_J}( t_{n-1} \cdots t_2 t_1),\qlb)[-1]$ }

\noindent since the Harish-Chandra restriction of $\mathrm{St}_{\L_I^{w_nF}}$ to $\T^{w_n F}$ is just the trivial character.

\item $y_n = w_0 w_J$ acts trivially on $W_J$ and $y_n (t_n w_n) y_n^{-1} = w_{n-1}$. We have also $K_{y_n} = J \cap {}^{y_n} (\Phi_I) = I$. The assumptions of Proposition \ref{ypartcase1} are clearly satisfied and we obtain 

\centers[1]{$ \Rgc(U_J \backslash \X_{y_n},\qlb) \, \simeq \, \Rgc(\mathbb{G}_m \times \X_{\L_J}(I,w_{n-1}),\qlb)$ }

\leftcenters[1]{and}{$ \Rgc(U_J \backslash \X_{y_n},\mathbf{St}) \, \simeq \, \Rgc(\mathbb{G}_m \times \X_{\L_J}(I,w_{n-1}),\mathbf{St}).$ }

\end{itemize}

\noindent The cohomology of   $\X_{\L_J}( t_{n-1} \cdots t_2 t_1)$ has been computed in \cite{Lu}. By induction, one can assume that the cohomology of $\X_{\L_J}(I,w_{n-1})$ is given by the Theorem (since the unipotent part of the cohomology depends only on the isogeny class of the group). We observe that a character in the principal series different from $\mathrm{Id}$ or $\mathrm{St}$ cannot appear in both $\Hc^\bullet(\X_{\L_J}( t_{n-1} \cdots t_2 t_1),\qlb)$ and $\Hc^\bullet(\X_{\L_J}(I,w_{n-1},\qlb)$ (resp. $\Hc^\bullet(\X_{\L_J}( t_{n-1} \cdots t_2 t_1),\qlb)$ and $\Hc^\bullet(\X_{\L_J}(I,w_{n-1},\mathbf{St})$). Using the long exact sequences given by the decomposition of $U_J \backslash \X(I,w_n)$ and \cite[Corollary 8.28.(v)]{DM3}, we can deduce explicitely each cohomology group of $U_J \backslash \X(I,w_n)$. To conclude, we observe that each of these cohomology groups is the Harish-Chandra restriction of the groups given in the theorem, corresponding to the characters of the principal series in the $\Phi_{2n-2}$-blocks of $\mathrm{Id}_G$ and $\mathrm{St}_G$. Finally, we know by \cite{BMM} that these characters actually appear in the cohomology of $\X(I,w_n)$ since they already appear in the alternating sum. 
\end{proof}

\begin{rmk} In order to deal with the series corresponding to the cuspidal unipotent character of $B_2$ we need extra information on the degree in which $B_{2,\mathrm{Id}}$ and $B_{2,\mathrm{St}}$ can appear.
\end{rmk}

\begin{center} {\subsection*{Acknowledgements}} \end{center}

I would like to thank François Digne and Jean Michel for stimulating discussions while this paper was written. An early version of this work was part of my Ph.D. thesis; I wish to thank Cédric Bonnafé for  many valuable comments and suggestions. 

\bk

\bibliographystyle{abbrv}
\bibliography{dlquotient}

\end{document}